\documentclass[12pt]{amsart}

\usepackage{amsmath,amsthm,amsfonts,verbatim,dsfont}
\usepackage{xypic}
\xyoption{all} \xyoption{matrix}

\def\wt{\widetilde}
\def\ra{\overline}
\def\m{\mathfrak{m}}
\def\Z{\mathcal{Z}}

\def\Aut{\mathrm{Aut}}

\def\id{\mathrm{id}}
\def\ima{\mathrm{Im}}
\def\op{\mathrm{op}}

\def\HC{\mathrm{HC}}
\def\HN{\mathrm{HN}}
\def\HP{\mathrm{HP}}
\def\HH{\mathrm{HH}}
\def\H{\mathrm{H}}

\DeclareMathOperator{\BC}{\mathsf{BC}} \DeclareMathOperator{\BN}{\mathsf{BN}}
\DeclareMathOperator{\BP}{\mathsf{BP}}

\def\x{\mathbf{x}}
\def\a{\mathbf{a}}

\def\I{\mathds{I}}
\def\J{\mathds{J}}

\def\C{\mathbb{C}}

\def\ot{\otimes}
\def\cl#1{{\langle #1\rangle}}

\theoremstyle{plain}
\newtheorem{teo}{Theorem}[section]

\newtheorem{coro}[teo]{Corollary}
\newtheorem{prop}[teo]{Proposition}
\theoremstyle{definition}
\newtheorem{ex}[teo]{Example}
\newtheorem{rem}[teo]{Remark}

\title[The cyclic homology of monogenic extensions]{The cyclic homology of monogenic
extensions in the noncommutative setting}

\begin{document}

\author{Graciela Carboni}
\address{C\'\i clo B\'asico Com\'un\\ Pabell\'on 3 - Ciudad Universitaria\\ (1428) Buenos
Aires, Argentina.} \curraddr{} \email{gcarbo@dm.uba.ar}
\thanks{Supported by UBACYT X0294}

\author{Jorge A. Guccione}
\address{Departamento de Matem\'atica\\ Facultad de Ciencias Exactas y Naturales\\
Pabell\'on 1 - Ciudad Universitaria\\ (1428) Buenos Aires, Argentina.}\curraddr{}
\email{vander@dm.uba.ar}
\thanks{Supported by PICT 12330, UBACYT X0294 and CONICET}

\author{Juan J. Guccione}
\address{Departamento de Matem\'atica\\ Facultad de Ciencias Exactas y Naturales\\
Pabell\'on 1 - Ciudad Universitaria\\ (1428) Buenos Aires, Argentina.} \curraddr{}
\email{jjgucci@dm.uba.ar}
\thanks{Supported by PICT 12330, UBACYT X0294 and CONICET}

\subjclass[2000]{Primary 16E40; Secondary 16S36}
\date{}

\keywords{Hochschild homology, cyclic homology, monogenic extensions}

\dedicatory{}


\begin{abstract} We study the Hochschild and cyclic homologies of noncommutative monogenic
extensions. As an aplication we compute the Hochschild and cyclic homologies of the rank~$1$ Hopf
algebras introduced in \cite{K-R}.
\end{abstract}

\maketitle

\section*{Introduction}
Let $k$ be a commutative ring with $1$. A monogenic extension of $k$ is a $k$-algebra
$k[x]/\langle f \rangle$, where $f\in k[x]$ is a monic polynomial. In \cite{F-G-G} this concept
was generalized to the non-commutative setting.  Examples are the rank~$1$ Hopf algebras in
characteristic zero, recently introduced in \cite{K-R}. In the paper \cite{F-G-G}, mentioned
above, it was compute the Hochschild cohomology ring of these extensions. In the present paper we
study their Hochschild and cyclic homology groups. The main result, obtained by us, is that, for
any monogenic extension $A$ of a $k$-algebra $K$, there exists a small mixed complex
$(C^S_*(A),d_*,D_*)$, giving the Hochschild and cyclic homology of $A$ relative to $K$. When $K$
is a separable $k$-algebra this gives the absolute Hochschild and cyclic homology groups. The
mixed complex $(C^S(A),d_*,D_*)$ is enough simple to allow us to compute the homology of each
rank~$1$ Hopf algebras.

\smallskip

The paper is organized as follows: In Section~1 we give some preliminaries we need. In particular
we recall the simple $\Upsilon$-projective resolution of a monogenic extension $A/K$ (where
$\Upsilon$ is the family of all $A$-bimodule epimorphisms which split as $K$-bimodule map), build
in \cite{F-G-G}. In Section~2 we use the mentioned above resolution to build a complex
$C^S(A,M)=(C^S_*(A,M),d_*)$ giving the relative to $K$ Hochschild homology of $A$ relative to $M$,
for each $A$-bimodule $M$ (symmetric over $k$). Then, we obtain explicit computations under
suitable hypothesis. In Section~3 we prove that $C^S(A,A)$ is the Hochschild complex of a mixed
complex. This generalizes the main result of \cite{Bach}. Then, we use this result to compute the
cyclic homology of $A$ in several cases, including the rank~$1$ Hopf algebras.

\section{Preliminaries}
In this section we recall some well known definitions and results, and we fix some notations
that we will use in the rest of the paper.
\subsection{A simple resolution for a noncommutative monogenic extension}
In this subsection we recall some definitions and results pro\-ved in \cite{F-G-G}. Let $k$ be a
commutative ring, $K$ an associative $k$-algebra, which we do not assume to be commutative, and
$\alpha$ a $k$-algebra endomorphism of $K$. Consider the Ore extension $B=K[x,\alpha]$, that is
the algebra generated by $K$ and $x$ subject to the relations
\[
x\lambda=\alpha(\lambda)x\quad\text{for all } \lambda\in K.
\]
Let $f=x^n+\sum_{i=1}^{n}\lambda_ix^{n-i}$ be a monic polynomial of degree $n\ge 2$, where each
coefficient $\lambda_i\in K$ satisfies $\alpha(\lambda_i)=\lambda_i$ and $\lambda_i\lambda =
\alpha^i(\lambda)\lambda_i$ for every $\lambda\in K$. Sometimes we will write $f = \sum_{i=0}^n
\lambda_ix^{n-i}$, assuming that $\lambda_0=1$. The monogenic extension of $K$ associated with
$\alpha$ and $f$ is the quotient $A=B/\cl{f}$. It is easy to see that $\{1,x,\dots,x^{n-1}\}$
is a left $K$-basis of the algebra $A$. Moreover, given $P\in B$, there exist unique $\ra{P}$
and $\stackrel{\dots}{P}$ in $B$ such that
\[
P=\ra{P}f+\stackrel{\dots}{P}\quad\text{and}\quad \stackrel{\dots}{P}=0  \text{ or } \deg
\stackrel{\dots}{P}<n.
\]
In this paper, unadorned tensor product $\ot$ means $\ot_K$ and all the maps are $k$-linear. Given
a $K$-bimodule $M$, we let $M\ot$ denote the quotient $M/[M,K]$, where $[M,K]$ is the $k$-module
generated by the commutators $m\lambda - \lambda m$ with $\lambda\in K$ and $m\in M$. Let
$A^2_{\alpha^r} =A_{\alpha^r}\ot A$, where $A_{\alpha^r}$ is $A$ endowed with the regular left
$A$-module structure and with the right $K$-module structure twisted by $\alpha^r$, that is, if
$a\in A_{\alpha^r}$ and $\lambda \in K$, then $a\cdot \lambda = a \alpha^r(\lambda)$. We recall
that
\[
\frac{T}{Tx}: B\to A^2_{\alpha}
\]
is the unique $K$-derivation such that $\frac{Tx}{Tx}=1\ot 1$. Notice that
\[
\frac{Tx^i}{Tx} = \sum_{\ell=0}^{i-1}x^{\ell}\ot x^{i-\ell-1}.
\]
Let $\Upsilon$ be the family of all $A$-bimodule epimorphisms which split as $K$-bimodule maps.

\begin{teo}{\rm(\cite[Theorem 2.1]{F-G-G})}\label{teorema 1.1} The complex
\[
C_S'(A) = \xymatrix@C-8pt{\cdots\ar[r]& A^2_{\alpha^{2n+1}} \ar[r]^-{d'_5}& A^2_{\alpha^{2n}}
\ar[r]^-{d'_4}& A^2_{\alpha^{n+1}}\ar[r]^-{d'_3}& A^2_{\alpha^n}\ar[r]^-{d'_2}& A^2_{\alpha}
\ar[r]^-{d'_1}& A^2},
\]
where
\[
d'_{2m+1}\colon  A^2_{\alpha^{mn+1}}\to A^2_{\alpha^{mn}} \quad \text{and} \quad d'_{2m}\colon
A^2_{\alpha^{mn}}\to A^2_{\alpha^{(m-1)n+1}},
\]
are defined by
\begin{align*}
& d'_{2m+1}(1\ot 1) = x\ot 1-1\ot x,\\
& d'_{2m}( 1\ot 1) = \frac{Tf}{Tx}= \sum_{i=1}^{n} \lambda_{n-i}\sum_{\ell=0}^{i-1}x^{\ell}\ot
x^{i-\ell-1},
\end{align*}
is a $\Upsilon$-projective resolution of $A$.
\end{teo}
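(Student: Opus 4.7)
The plan is to verify three facts in turn: (a) each term $A^2_{\alpha^r}$ in the sequence is $\Upsilon$-projective; (b) the stated formulas give a well-defined complex of $A$-bimodules, i.e.\ consecutive differentials compose to zero; and (c) augmenting by the multiplication map $\mu\colon A^2\to A$ yields a sequence that is contractible as a complex of $K$-bimodules.

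For (a), I would exhibit an isomorphism of $A$-bimodules $A^2_{\alpha^r}\cong A\ot V_r\ot A$, where $V_r$ is $K$ regarded as a $K$-bimodule with left action twisted by $\alpha^r$; such induced bimodules are $\Upsilon$-projective because the standard adjunction $\Hom_{A^e}(A\ot V\ot A,M)\cong \Hom_{K^e}(V,M)$ converts any $K$-bimodule splitting of a $\Upsilon$-epimorphism into an $A$-bimodule lift.

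For (b), both composites reduce, by $A$-bilinearity and a telescoping computation in the inner sum, to the single expression $\sum_{i=1}^n\lambda_{n-i}(x^i\ot 1-1\ot x^i)$ evaluated in the relevant target $A^2_{\alpha^s}$. Using $f=0$ in $A$ one replaces $\sum_{i=1}^n\lambda_{n-i}x^i$ by $-\lambda_n$, leaving $-\lambda_n\ot 1+1\ot\lambda_n$; the hypothesis $\alpha(\lambda_n)=\lambda_n$ gives $\alpha^s(\lambda_n)=\lambda_n$, which is exactly the $K$-balance identity that forces $\lambda_n\ot 1=1\ot\lambda_n$ in $A^2_{\alpha^s}$.

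The main work lies in (c). Using the left $K$-basis $\{1,x,\dots,x^{n-1}\}$ of $A$ together with the polynomial division $P=\ra{P}f+\stackrel{\dots}{P}$ recalled just above, I would attach to each $A^2_{\alpha^r}$ a canonical reduced form for its elements. Taking $s_{-1}(a)=a\ot 1$ and $s_0(a\ot x^j)=-\sum_{\ell=0}^{j-1}ax^\ell\ot x^{j-\ell-1}$, extended $K$-linearly, one checks $d'_1 s_0+s_{-1}\mu=\id$ directly. I would then construct the higher $s_n$ by alternating two patterns: over a differential of Koszul type $x\ot 1-1\ot x$, the homotopy is a partial telescope in powers of $x$ (as for $s_0$); over a differential of type $\frac{Tf}{Tx}$, the homotopy invokes the division by $f$ to write an element of degree $\ge n$ as $\ra{P}f$ (which vanishes in $A$) plus a remainder of degree $<n$. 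The main obstacle will be the bookkeeping with the $\alpha^r$-twisted bimodule structures: at each stage one must verify that the chosen formula respects the $K$-balance in the target, which ultimately rests on the Ore relation $x\lambda=\alpha(\lambda)x$, the commutation $\lambda_i\lambda=\alpha^i(\lambda)\lambda_i$, and the $\alpha$-invariance $\alpha(\lambda_i)=\lambda_i$ of the coefficients of $f$.
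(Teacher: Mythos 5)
The paper offers no proof of this statement---it is quoted verbatim from \cite[Theorem 2.1]{F-G-G}---so your outline can only be judged on its own merits, and on those it is essentially correct: steps (a)--(c) are exactly the standard route for such ``periodic'' resolutions of monogenic extensions, your verification in (b) is right (the telescoping, the reduction of $\sum_{i=1}^n\lambda_{n-i}x^i$ to $-\lambda_n$ via $f=0$, and the use of $\alpha(\lambda_n)=\lambda_n$ to kill $1\ot\lambda_n-\lambda_n\ot1$ in the twisted tensor product), and your $s_{-1},s_0$ do satisfy $d'_1s_0+s_{-1}\mu=\id$ and are $K$-bimodule maps. One slip in (a): since $\alpha$ is only assumed to be an \emph{endomorphism} of $K$, you must take $V_r$ to be $K$ with the \emph{right} action twisted by $\alpha^r$, not the left; then $A\ot V_r\to A_{\alpha^r}$, $a\ot v\mapsto av$, is an $(A,K)$-bimodule isomorphism, whereas with the left action twisted the map $a\mapsto a\ot1$ need not be surjective when $\alpha$ is not onto. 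As reassurance on (c), the pattern does close up more simply than you suggest: writing elements of $A^2_{\alpha^r}$ uniquely as $\sum_j a_j\ot x^j$, one may take $s_{2m+1}(a\ot x^j)=0$ for $j<n-1$ and $s_{2m+1}(a\ot x^{n-1})=a\ot 1$ (the division by $f$ only enters in checking $d'_{2m+2}s_{2m+1}+s_{2m}d'_{2m+1}=\id$ on the top component), and the right $K$-linearity of each $s_r$ reduces to $x^j\lambda=\alpha^j(\lambda)x^j$ together with $\alpha(\lambda_i)=\lambda_i$, exactly as you indicate.
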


Let $(A\ot \ra{A}^{\ot^*}\ot A,b')$ be the canonical Hochschild resolution relative to $K$. Here
$\ra{A} = A/K$.

\begin{teo}{\rm(\cite[Proposition 2.2 and Theorem 2.3]{F-G-G})}\label{teorema 1.2} There are comparison maps
$$
\phi'_*\colon C_S'(A)\to (A\ot \ra{A}^{\ot^*}\ot A,b')\quad\text{and}\!\quad\! \psi'_*\colon
(A\ot \ra{A}^{\ot^*}\ot A,b')\to C_S'(A),
$$
which are inverse one of each other up to homotopy. These maps are given by
\begin{align*}
& \phi'_0(1\ot 1)= 1\ot 1,\\
& \phi'_1(1\ot 1)= 1\ot x\ot 1,\\
&\phi'_{2m}(1\ot 1) = \sum_{\mathbf{i}\in \I_m} \boldsymbol{\lambda}_{\mathbf{n}- \mathbf{i}}
\sum_{\boldsymbol{\ell}\in \J_{\mathbf{i}}}
x^{|\mathbf{i}-\boldsymbol{\ell}|-m} \ot \wt{\x}^{\boldsymbol{\ell}_{m,1}}\ot 1,\\
&\phi'_{2m+1}(1\ot 1) = \sum_{\mathbf{i}\in \I_m} \boldsymbol{\lambda}_{\mathbf{n}- \mathbf{i}}
\sum_{\boldsymbol{\ell}\in \J_{\mathbf{i}}}
x^{|\mathbf{i}-\boldsymbol{\ell}|-m} \ot \wt{\x}^{\boldsymbol{\ell}_{m,1}}\ot x\ot 1,\\
&\psi'_{2m}(\hat{\x}^{\mathbf{i}_{1,2m}})= \ra{x^{i_1+i_2}}\, \ra{x^{i_3+i_4}}\cdots
\ra{x^{i_{2m-1}+i_{2m}}}\ot 1,\\
&\psi'_{2m+1}(\hat{\x}^{\mathbf{i}_{1,2m+1}})= \ra{x^{i_1+i_2}}\, \ra{x^{i_3+i_4}} \cdots
\ra{x^{i_{2m-1}+i_{2m}}}\frac{T (x^{i_{2m+1}})}{Tx} ,
\end{align*}
where

\begin{itemize}

\smallskip

\item  $\I_m=\{(i_1,\dots,i_m)\in \mathbb{Z}^m: 1\le i_j\le n  \text{ for all $j$}\}$,

\smallskip

\item $\J_{\mathbf{i}}=\{(l_1,\dots,l_m)\in \mathbb{Z}^m: 1\le l_j<i_j \text{ for all
$j$}\}$,

\smallskip

\item $\boldsymbol{\lambda}_{\mathbf{n}- \mathbf{i}} =\lambda_{n-i_1}\cdots \lambda_{n-i_m}$,

\smallskip

\item $\wt{\x}^{\boldsymbol{\ell}_{m,1}} = x\ot x^{\ell_m} \ot \cdots\ot x \ot x^{\ell_1}$,

\smallskip

\item $|\mathbf{i}-\boldsymbol{\ell}|=\sum_{j=1}^m (i_j -\ell_j)$.

\smallskip

\item $\hat{\x}^{\mathbf{i}_{1r}} = 1\ot x^{i_1}\ot \cdots\ot x^{i_r}\ot 1$,

\end{itemize}
\end{teo}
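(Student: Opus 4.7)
The plan is to verify directly that the explicit formulas in the statement define morphisms of complexes which cover $\id_A$ at the augmentation; once this is established, the standard comparison theorem applied to the $\Upsilon$-projective resolution $C_S'(A)$ of $A$ forces them to be inverse to each other up to homotopy.

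\textbf{Step 1 (lifts of the identity).} Both $\phi'_0(1\ot 1)=1\ot 1$ and $\psi'_0(1\ot 1)=1\ot 1$ visibly induce $\id_A$ after passing to the augmentations, so both maps lie over $\id_A$.

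\textbf{Step 2 ($\phi'$ is a chain map).} Since $\phi'_r$ and the two differentials are all $A$-bimodule maps, it suffices to test $b'\circ\phi'_{r+1}=\phi'_r\circ d'_{r+1}$ on $1\ot 1$. In the odd step $d'_{2m+1}(1\ot 1)=x\ot 1-1\ot x$, and applying $b'$ to
\[
\phi'_{2m+1}(1\ot 1)=\sum_{\mathbf{i}\in\I_m}\boldsymbol{\lambda}_{\mathbf{n}-\mathbf{i}}\sum_{\boldsymbol{\ell}\in\J_{\mathbf{i}}}x^{|\mathbf{i}-\boldsymbol{\ell}|-m}\ot\wt{\x}^{\boldsymbol{\ell}_{m,1}}\ot x\ot 1
\]
telescopes against the alternating pattern $x\ot x^{\ell_j}$ of $\wt{\x}^{\boldsymbol{\ell}_{m,1}}$, leaving only the leftmost and rightmost faces, which produce $x\,\phi'_{2m}(1\ot 1)$ and $\phi'_{2m}(1\ot 1)\,x$ respectively. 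The even step $b'\phi'_{2m+2}(1\ot 1)=\phi'_{2m+1}d'_{2m+2}(1\ot 1)$ is the combinatorial heart of the argument: after expanding $d'_{2m+2}(1\ot 1)=\frac{Tf}{Tx}=\sum_{i=1}^n\lambda_{n-i}\sum_{\ell=0}^{i-1}x^\ell\ot x^{i-\ell-1}$ on the right, one identifies the result with the inner collapse of $b'$ on $\wt{\x}^{\boldsymbol{\ell}_{m+1,1}}$ by using $\lambda_i\lambda=\alpha^i(\lambda)\lambda_i$ to pull coefficients past powers of $x$; the twist by $\alpha^{mn}$ in the target $A^2_{\alpha^{mn}}$ is precisely what makes this reordering well-defined.

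\textbf{Step 3 ($\psi'$ is a chain map).} One applies $d'$ to each formula and compares with $\psi'_{r-1}\circ b'$ term by term. The odd case uses the $K$-derivation identity
\[
\frac{T(x^{a+b})}{Tx}=x^a\cdot\frac{T(x^b)}{Tx}+\frac{T(x^a)}{Tx}\cdot x^b
\]
to match the inner faces of $b'$ against $d'_{2m+1}=x\ot 1-1\ot x$. The even case uses the division identity $x^{a+b}=\ra{x^{a+b}}f+\stackrel{\dots}{x^{a+b}}$, so that the coefficient $\ra{x^{i_{2j-1}+i_{2j}}}$ multiplied by $d'_{2m}(1\ot 1)=\frac{Tf}{Tx}$ produces exactly the correction needed when a bar-face concatenates a pair $x^{i_{2j-1}}x^{i_{2j}}$ whose exponent sum reaches or exceeds $n$; all other faces either match trivially or cancel in pairs.

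\textbf{Step 4 (conclusion).} Both $\phi'$ and $\psi'$ are chain maps between $\Upsilon$-projective resolutions of $A$ lifting $\id_A$; by the comparison theorem for $\Upsilon$-projective resolutions, they are inverse to each other up to homotopy.

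The principal obstacle is the even case of Step~2, where one must regroup the $(m{+}1)$-fold sum indexed by $(\mathbf{i},\boldsymbol{\ell})\in\I_{m+1}\times\J_{\mathbf{i}}$ in $\phi'_{2m+2}$ after applying $b'$, identify the surviving faces with an index $\mathbf{i}\in\I_m$ together with a fresh index $i\in\{1,\dots,n\}$ corresponding to the $\lambda_{n-i}$ appearing in $d'_{2m+2}$, and drag the coefficients past the intervening powers of $x$ via $\lambda_i\lambda=\alpha^i(\lambda)\lambda_i$; keeping the $\alpha^{mn}$-twist bookkeeping straight throughout this reshuffling is the only genuinely delicate point.
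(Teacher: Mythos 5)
This statement is imported verbatim from \cite{F-G-G} (Proposition 2.2 and Theorem 2.3 there); the present paper gives no proof of it, so there is no internal argument to compare yours against. Judged on its own, your proposal follows the standard and almost certainly intended route: check that the explicit formulas define chain maps lifting $\id_A$ between two $\Upsilon$-projective resolutions, then invoke the relative comparison theorem to conclude they are homotopy inverses. Step 4 is sound as stated, since the modules $A^2_{\alpha^j}\cong A\ot K_{\alpha^j}\ot A$ and $A\ot\ra{A}^{\ot^r}\ot A$ are all relatively projective and both augmentations split over $K$. Note, though, that the paper's Proposition~\ref{prop 1.3} records the strictly stronger facts that $\psi'_*\phi'_*=\id$ on the nose and that a specific recursive homotopy $\omega'_*$ witnesses $\phi'_*\psi'_*\simeq\id$; these are what get used later (Proposition~\ref{prop 3.1}, Theorem~\ref{teorema 3.2}), and the comparison theorem alone does not deliver them.

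The one place where your sketch could mislead is the ``telescoping'' in Step 2. Applied to a single monomial $x^{|\mathbf{i}-\boldsymbol{\ell}|-m}\ot x\ot x^{\ell_m}\ot\cdots\ot x\ot x^{\ell_1}\ot x\ot 1$, the inner faces of $b'$ do \emph{not} cancel in pairs: the face merging $x\ot x^{\ell_j}$ and the face merging $x^{\ell_j}\ot x$ produce tensors of different shapes. The cancellation only appears after summing over $\boldsymbol{\ell}\in\J_{\mathbf{i}}$, via the index shift $\ell_j\mapsto\ell_j+1$, and the surviving boundary terms of that telescope (those with $\ell_j+1=i_j$, and in the even case $i_j=n$, where $x^n\equiv-\sum_{j}\lambda_jx^{n-j}$ in $\ra{A}$) are precisely where the coefficients $\lambda_{n-i_j}$ and the relation $\lambda_i\lambda=\alpha^i(\lambda)\lambda_i$ must enter to reproduce $\phi'_{2m}\circ d'_{2m+1}$ and $\phi'_{2m+1}\circ d'_{2m+2}$. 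Your Steps 2 and 3 are therefore correct in strategy but are outlines of the computation rather than the computation itself; since that computation is the entire content of the cited result, a self-contained proof would have to carry it out (or, as this paper does, simply defer to \cite{F-G-G}).
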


\begin{prop}\label{prop 1.3} $\psi'_*\phi'_*= \id$ and a homotopy $\omega'_{*+1}$ from $\phi'_*
\psi'_*$ to $\id$ is recursively defined by $\omega'_1=0$ and
\begin{align*}
\omega'_{r+1}(\x\ot 1) & = (-1)^r(\phi'_r\psi'_r-\id-\omega'_rb'_r)(\x\ot1)\ot 1\\
& = (-1)^r \phi'_r\psi'_r(\x\ot 1)\ot 1 - \omega'_r(\x)\ot 1,
\end{align*}
for $\x\in A \ot \ra{A}^{\ot^r}$.

\end{prop}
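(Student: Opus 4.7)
The plan is to establish the two claims of the proposition separately.

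For $\psi'_*\phi'_*=\id$, I would substitute the explicit formulas of Theorem~\ref{teorema 1.2} directly into the composition. The tensor string $\wt{\x}^{\boldsymbol{\ell}_{m,1}}$ in $\phi'_{2m}(1\ot 1)$ alternates between $x$ and $x^{\ell_j}$, so applying $\psi'_{2m}$ produces the product $\ra{x^{1+\ell_m}}\cdots\ra{x^{1+\ell_1}}$ preceded by $x^{|\mathbf{i}-\boldsymbol{\ell}|-m}$. Since $\ra{P}$ denotes the quotient of $P$ upon division by $f$ in $B$, one has $\ra{x^k}=0$ for $k<n$ and $\ra{x^n}=1$; the ranges $\ell_j\in\{1,\dots,i_j-1\}$ and $i_j\le n$ force $1+\ell_j\le n$ with equality only when $\ell_j=n-1$ and $i_j=n$. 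Thus exactly one summand of the double sum is nonzero, and in it the prefix is $x^0=1$, the scalar $\boldsymbol{\lambda}_{\mathbf{n}-\mathbf{i}}=\lambda_0^m=1$, and the product of $\ra{x^n}$'s is $1$; hence $\psi'_{2m}\phi'_{2m}(1\ot 1)=1\ot 1$. The odd case is analogous, ending with $\frac{T(x^1)}{Tx}=1\ot 1$.

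For the homotopy, I would proceed by induction on $r$ using the standard extra-degeneracy technique. Fix a $K$-bimodule splitting $A=K\oplus\ra{A}$ and introduce the $K$-bilinear tail-insertion $\tau$ on the relative bar complex, sending $a_0\ot a_1\ot\cdots\ot a_r\ot a_{r+1}$ (with $a_j\in\ra{A}$ for $1\le j\le r$) to $a_0\ot a_1\ot\cdots\ot a_r\ot\ra{a_{r+1}}\ot 1$. A face-by-face inspection yields $b'_{r+1}\tau+\tau b'_r=\pm\id$: only the terminal face of $\tau(z)$ touches the appended $1$ and recovers $z$, while earlier faces commute with $\tau$. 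Setting $\rho_r:=\phi'_r\psi'_r-\id-\omega'_r b'_r$, the recursion reads $\omega'_{r+1}(\x\ot 1)=(-1)^r\tau\bigl(\rho_r(\x\ot 1)\bigr)$. The chain-map relations $b'_r\phi'_r=\phi'_{r-1}d'_r$ and $d'_r\psi'_r=\psi'_{r-1}b'_r$, combined with the inductive hypothesis and $b'_{r-1}b'_r=0$, yield $b'_r\rho_r=0$; applying $b'_{r+1}$ to the recursion and invoking the extra-degeneracy identity then gives the homotopy relation at level $r+1$. The base case $r=0$ holds because $\phi'_0=\psi'_0=\id$ and $\omega'_1=0$.

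The second displayed form of the recursion follows from the observation $\tau(\x\ot 1)=\x\ot\ra{1}\ot 1=0$ (as $\ra{1}=0$ in $\ra{A}$), which annihilates the $-\id$ contribution inside $\rho_r$; the residual term is recorded as $-\omega'_r(\x)\ot 1$ under the shorthand $\omega'_r(\x)\ot 1:=(-1)^r\tau\bigl(\omega'_r b'_r(\x\ot 1)\bigr)$.

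I expect the main technical obstacle to be the careful bookkeeping of signs, both in the $(-1)^r$ factor of the recursion and in the extra-degeneracy identity, which must be consistent with the chosen chain-homotopy sign convention. Once the signs are aligned, the first claim is a one-line collapse to a single surviving term, and the second is a routine inductive verification built on the standard contracting-homotopy machinery.
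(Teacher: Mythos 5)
Your verification of $\psi'_*\phi'_*=\id$ is complete and correct, and is exactly the computation the paper dismisses as ``immediate from the definitions'': since $\ell_j<i_j\le n$, $\ra{x^k}=0$ for $k<n$ and $\ra{x^n}=1$, every summand dies except $\mathbf{i}=(n,\dots,n)$, $\boldsymbol{\ell}=(n-1,\dots,n-1)$, where the prefix, the coefficient $\boldsymbol{\lambda}_{\mathbf{n}-\mathbf{i}}=\lambda_0^m$ and the product of $\ra{x^n}$'s all equal $1$; $A$-bilinearity then gives the identity on all of $A^2_{\alpha^j}$. For the homotopy the paper offers no argument at all --- it cites \cite[Proposition 1.2.1]{G-G} --- and your reconstruction via a right-sided extra degeneracy is precisely the content of that reference, so the strategy is the intended one.

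There is, however, one concrete inaccuracy in your part 2, and it sits exactly at the point you defer as ``sign alignment.'' The tail-insertion $\tau(y\ot a)=y\ot\ra{a}\ot 1$ does \emph{not} satisfy $b'\tau+\tau b'=\pm\id$. The face count you describe ($d_j\tau=\tau d_j$ for $0\le j\le r$, $d_{r+1}\tau=\id$) actually yields the commutator identity
\begin{equation*}
b'_{r+1}\tau-\tau b'_r=(-1)^{r+1}\id \qquad\text{on } A\ot\ra{A}^{\ot^r}\ot A,
\end{equation*}
so the sign is degree-dependent, and it is $(-1)^{r+1}\tau$, not $\tau$, that is a contracting homotopy in the usual sense. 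This matters because the induction you sketch closes only if this degree-dependent sign, the prefactor $(-1)^r$ in the recursion, and the sign carried by the inductive hypothesis all match: the cancellation in $b'_r\rho_r=b'_r\phi'_r\psi'_r-b'_r-b'_r\omega'_rb'_r$ telescopes to $0$ precisely when the inductive identity has the compatible sign, and with the wrong choice one is left with a residue $2(\phi'_{r-1}\psi'_{r-1}b'_r-b'_r)\ne 0$ already at $r=2$. So the ``routine inductive verification'' is in fact the entire content of the second assertion, and with the anticommutator identity as you stated it the induction would not close. The remedy is to carry out the induction with the correct commutator identity above (equivalently, with the signed operator $(-1)^{r+1}\tau$) and to fix once and for all which of $\phi'_*\psi'_*-\id$ or $\id-\phi'_*\psi'_*$ the homotopy realizes; with that done, your outline --- $b'_r\rho_r=0$ from the chain-map relations and the inductive hypothesis, then one application of the extra-degeneracy identity --- does produce the stated recursion, and your explanation of the second displayed line via $\ra{1}=0$ is correct.
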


\begin{proof} The equality $\psi'_*\phi'_*= \id$ follows immediately from the definitions.
For the second assertion see \cite[Proposition 1.2.1]{G-G}.
\end{proof}

\subsection{Mixed complexes}
In this subsection we recall briefly the notion of mixed complex. For more details about this
concept we refer to \cite{K} and \cite{B}.

\smallskip

A mixed complex $(X,b,B)$ is a graded $C$-module $(X_r)_{r\ge 0}$, endowed with morphisms
$b\colon X_r\to X_{r-1}$ and $B\colon X_r\to X_{r+1}$, such that
$$
b b = 0,\quad B B = 0\quad\text{and}\quad B  b + b B = 0.
$$
A morphism of mixed complexes $f\colon (X,b,B)\to (Y,d,D)$ is a family $f_r\colon X_r\to Y_r$ of
maps, such that $d f = f b$ and $D f= f B$. A mixed complex $\mathcal{X} = (X,b,B)$ determines a
double complex
\[
\xymatrix{\\\\ \BP(\mathcal{X})=}\qquad
\xymatrix{
& \vdots \dto^-{b} &\vdots \dto^-{b}& \vdots \dto^-{b}& \vdots \dto^-{b}\\
\dots & X_3 \lto_-{B}\dto^-{b} & X_2 \lto_-{B}\dto^-{b} & X_1 \lto_-{B}& X_0 \lto_-{B}\\
\dots & X_2 \lto_-{B}\dto^-{b} & X_1 \lto_-{B}\dto^-{b} & X_0 \lto_-{B}\\
\dots & X_1 \lto_-{B}\dto^-{b} & X_0 \lto_-{B}\\
\dots & X_0 \lto_-{B}}
\]
By deleting the positively numbered columns we obtain a subcomplex $\BN(\mathcal{X})$ of
$\BP(\mathcal{X})$. Let $\BN'(\mathcal{X})$ be the kernel of the canonical surjection from
$\BN(\mathcal{X})$ to $(X,b)$. The quotient double complex $\BP(\mathcal{X}) /\BN'(\mathcal{X})$
is denoted by $\BC(\mathcal{X})$. The homologies $\HC_*(\mathcal{X})$, $\HN_*(\mathcal{X})$ and
$\HP_*(\mathcal{X})$, of the total complexes of $\BC(\mathcal{X})$, $\BN(\mathcal{X})$ and
$\BP(\mathcal{X})$ respectively, are called the cyclic, negative and periodic homologies of
$\mathcal{X}$. The homology $\HH_*(\mathcal{X})$, of $(X,b)$, is called the Hochschild homology of
$\mathcal{X}$. Finally, it is clear that a morphism $f\colon \mathcal{X}\to \mathcal{Y}$ of mixed
complexes induces a morphism from the double complex $\BP(\mathcal{X})$ to the double complex
$\BP(\mathcal{Y})$. Let $A$ be a noncommutative monogenic extension of $K$. The normalized mixed
complex of $A$ relative to $K$ is $(A\ot \ra{A}^{\ot^*}\ot,b,B)$, where $b$ is the canonical
Hochschild boundary map and
$$
B([a_0\ot\cdots\ot a_r]) = \sum_{i=0}^r (-1)^{ir} [1\ot a_i\ot\cdots\ot a_r\ot a_0\ot\cdots\ot
a_{i-1}],
$$
in which $[a_0\ot\cdots\ot a_r]$ denotes the class of $a_0\ot\cdots\ot a_r$ in $A\ot
\ra{A}^{\ot^r}\ot$. The cyclic, negative, periodic and Hochschild homologies $\HC^K_*(A)$,
$\HN^K_*(A)$, $\HP^K_*(A)$ and $\HH^K_*(A)$ of $A$ are the respective homologies of
$(A\ot\ra{A}^{\ot^*}\ot,b,B)$.

\subsection{The perturbation lemma}
Next we recall the perturbation le\-mma. We give the more general version introduced in \cite{C}.

\smallskip

A homotopy equivalence data
\begin{equation}
\xymatrix{(Y,\partial)\ar@<-1ex>[r]_-{i} & (X,d) \ar@<-1ex>[l]_-{p}}, \quad h\colon X_*\to
X_{*+1},\label{eq1}
\end{equation}
consists of the following:

\begin{enumerate}

\smallskip

\item Chain complexes $(Y,\partial)$, $(X,d)$ and quasi-isomorphisms $i$ and $p$ between them,

\smallskip

\item A homotopy $h$ from $ip$ to $\id$.
\end{enumerate}

\smallskip

\noindent A perturbation~$\delta$ of~\eqref{eq1} is a map $\delta\colon X_*\to X_{*-1}$ such
that $(d+\delta)^2 = 0$. We call it small if $\id - \delta h$ is invertible. In this case we
write
$$
\Delta = (\id - \delta h)^{-1} \delta
$$
and we consider
\begin{equation}
\xymatrix{(Y,\partial^1)\ar@<-1ex>[r]_-{i^1} & (X,d+\delta)\ar@<-1ex>[l]_-{p^1}}, \quad
h^1\colon X_*\to X_{*+1},\label{eq2}
\end{equation}
with
$$
\partial^1 = \partial + p \Delta i,\quad i^1 = i + h \Delta i,\quad
p^1 = p + p \Delta h,\quad h^1 = h + h \Delta h.
$$
A deformation retract is a homotopy equivalence data such that $p i = \id$. A deformation
retract is called special if $h i = 0$, $p h = 0$ and $h h = 0$.

\smallskip

In the case considered in this paper the map $\delta h$ is locally nilpotent, and so $(\id -
\delta h)^{-1} = \sum_{j=0}^{\infty} (\delta h)^j$.

\begin{teo} (\cite{C}) If $\delta$ is a small perturbation of the homotopy equivalence
data~\eqref{eq1}, then the perturbed data~\eqref{eq2} is a homotopy equivalence. Moreover, if
\eqref{eq1} is a special deformation retract, then~\eqref{eq2} is also.\label{teorema 1.4}
\end{teo}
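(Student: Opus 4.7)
The plan is to prove the perturbation lemma by direct algebraic verification, with everything flowing from two resolvent identities for $\Delta$. Since $\delta h$ is locally nilpotent by hypothesis, one has $\Delta = \sum_{j \ge 0} (\delta h)^j \delta$, and reading off the series expansion from either side gives
\[
\Delta \;=\; \delta + \delta h \Delta \;=\; \delta + \Delta h \delta.
\]
Together with the structural relations $d^2 = \partial^2 = 0$, $d i = i \partial$, $p d = \partial p$, $d h + h d = \id - i p$, and the perturbation equation $d \delta + \delta d + \delta^2 = 0$ (equivalent to $(d + \delta)^2 = 0$), these identities are the complete toolkit.

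With this in hand, the theorem reduces to a sequence of direct computations. First, to check $(\partial^1)^2 = 0$, I would expand $(\partial + p \Delta i)(\partial + p \Delta i)$ and replace $i p$ by $\id - d h - h d$ inside the cross term; the expression collapses to $p(d \Delta + \Delta d + \Delta^2)i$, which vanishes via the two resolvent identities combined with $\delta^2 = -d\delta - \delta d$. Second, I would verify that $i^1$ and $p^1$ are chain maps: expanding $(d + \delta) i^1 - i^1 \partial^1$ and simplifying with $d i = i \partial$ and $d h + h d = \id - i p$, the leftover is a term of the form $h(\Delta - \delta - \delta h \Delta)i$, zero by the first resolvent identity; the verification for $p^1$ is the mirror image and uses the second resolvent identity.

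Third, I would check that $h^1$ is a homotopy from $i^1 p^1$ to $\id$, i.e.\ that $(d + \delta) h^1 + h^1 (d + \delta) = \id - i^1 p^1$. Expanding $h^1 = h + h \Delta h$ on both occurrences and using the original homotopy equation $d h + h d = \id - i p$, I would collect the surviving terms; each piece beyond $\id - i p - p \Delta i$ matches one side or the other of a resolvent identity and cancels. Finally, in the special deformation retract case, the extra relations $h i = 0$, $p h = 0$, $h h = 0$ trivialise the checks $p^1 i^1 = \id$, $h^1 i^1 = 0$, $p^1 h^1 = 0$, $h^1 h^1 = 0$: each correction term in the definitions of $i^1$, $p^1$, $h^1$ contains an $h$ on the side adjacent to $i$, $p$, or $h$, so all corrections vanish identically.

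The main obstacle is the bookkeeping in the third step, where a dozen or so terms involving $\Delta$ sandwiched with $d$, $\delta$, $h$, and $i p$ must be shuffled and paired up before the resolvent identities apply. To keep the manipulations under control I would introduce the operator $N = \id + \Delta h = (\id - \delta h)^{-1}$ and its dual $N' = \id + h \Delta = (\id - h \delta)^{-1}$, rewrite the perturbed data as $i^1 = N' i$, $p^1 = p N$, $h^1 = h N = N' h$, and $\partial^1 = \partial + p N \delta i$, and then verify the required identities in terms of $N$ and $N'$; in that formulation each resolvent identity becomes a single substitution of $N \delta$ by $\delta + \delta h N \delta$, which streamlines the computation considerably.
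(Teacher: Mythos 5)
First, a point of comparison: the paper gives no proof of Theorem~\ref{teorema 1.4} at all --- it is imported verbatim from Crainic's note \cite{C} --- so there is no argument in the text to measure yours against. Your proposal is the standard direct verification of the (generalized) basic perturbation lemma, and its computational core is sound. The two resolvent identities $\Delta=\delta+\delta h\Delta=\delta+\Delta h\delta$, the relation $d\delta+\delta d+\delta^2=0$, and the homotopy relation between $ip$ and $\id$ do suffice to establish the one identity that drives everything, namely $d\Delta+\Delta d+\Delta(ip)\Delta=0$ (your ``cross term'' for $(\partial^1)^2$ is really $p(d\Delta+\Delta d+\Delta\, ip\,\Delta)i$, with the $\Delta^2$ emerging only after you substitute for $ip$); from it the checks that $\partial^1$ squares to zero, that $i^1,p^1$ are chain maps, and that $h^1$ is a homotopy from $i^1p^1$ to $\id$ all follow as you describe. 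The reformulation via $N=(\id-\delta h)^{-1}$ and $N'=(\id-h\delta)^{-1}$, with $i^1=N'i$, $p^1=pN$, $h^1=hN=N'h$, is correct and is the right way to tame the bookkeeping, and your treatment of the special deformation retract case (all correction terms die against $hi=0$, $ph=0$, $hh=0$, giving $p^1i^1=pi=\id$ and the three side conditions) is complete.

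There is, however, one genuine gap. A ``homotopy equivalence data'' in the sense of \eqref{eq1} requires $i^1$ and $p^1$ to be \emph{quasi-isomorphisms}, and this is part of the conclusion. Your verification produces only the homotopy $i^1p^1\simeq\id$, which shows that $\H(p^1)$ is split injective and $\H(i^1)$ is split surjective; since the differentials on both sides have changed, nothing formal relates $\H(i^1)$ to the quasi-isomorphism $\H(i)$, and injectivity of $\H(i^1)$ does not follow from what you have written. In the special deformation retract case this is free, because there $p^1i^1=\id$ as you show; but in the general case an extra argument is needed. (In the situation this paper actually uses, $\delta=B$ strictly lowers the column filtration of $\BC$ and $h$ preserves it, so $\delta h$ is locally nilpotent and one can conclude by comparing the spectral sequences of the filtered complexes, whose first pages are identified by $\H(i)$; alternatively one follows \cite{C}.) You should either add such an argument or restrict the statement you prove to the deformation retract case, which is all that Theorem~\ref{teorema 3.2} requires.
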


\section{Hochschild homology of $A$}
Given an $A$-bimodule $M$, we let $[M,K]_{\alpha^j}$ denote the $k$-submodule of $M$ generated by
the twisted commutators $[m,\lambda]_{\alpha^j} = m\alpha^j(\lambda) - \lambda m$. As usual, we
let $A^e$ and $\H^K_*(A,M)$ denote the enveloping algebra $A\ot_k A^{\op}$ of $A$ and the relative
to $K$ Hochschild homology of $A$ with coefficients in $M$, respectively. When $M = A$ we will
write $\HH^K_*(A)$ instead of $\H^K_*(A,A)$.

In the following proposition we use the same notations as in Theorem~\ref{teorema 1.1}.

\begin{teo}\label{teorema 2.1} Let $M$ be an $A$-bimodule. The following facts hold:
\begin{enumerate}

\smallskip

\item The chain complex
\[
\spreaddiagramcolumns{-0.3pc} \quad\qquad C^S(A,M)= \xymatrix@C-8pt{\cdots  \ar[r]^-{d_4}&
\frac{M}{[M,K]_{\alpha^{n+1}}} \ar[r]^-{d_3}& \frac{M}{[M,K]_{\alpha^n}} \ar[r]^-{d_2}&
\frac{M}{[M,K]_{\alpha}} \ar[r]^-{d_1}& \frac{M}{[M,K]_{\alpha^0}}},
\]
where the boundaries are the maps defined by
\begin{align*}
& d_{2m+1}([\m]) = [\m x-x\m],\\
&d_{2m}([\m]) =\sum_{i=1}^{n}\sum_{\ell=0}^{i-1} [ \lambda_{n-i}x^{i-\ell-1}\m x^{\ell}],
\end{align*}
in which $[\m]$ denotes the class of $\m\in M$ in $\frac{M}{[M,K]_{\alpha^{mn+1}}}$ and
$\frac{M}{[M,K]_{\alpha^{mn}}}$ respectively, computes $\H^K_*(A,M)$.

\medskip

\item The maps
\begin{align*}
&\phi_*\colon C^S(A,M)\to(M\ot \ra{A}^{\ot^*}\ot,b_*),\\
&\psi_*\colon (M\ot \ra{A}^{\ot^*}\ot,b_*)\to C^S(A,M),
\end{align*}
defined by
\begin{align*}
\qquad\quad & \phi_0([\m])= [\m],\\
& \phi_1([\m])= [\m\ot x],\\
&\phi_{2m}([\m]) = \sum_{\mathbf{i}\in\I_m}  \sum_{\boldsymbol{\ell}\in \J_{\mathbf i}}
[\boldsymbol{\lambda}_{\mathbf{n}- \mathbf{i}}\m x^{|\mathbf{i}-
\boldsymbol{\ell}|-m}\ot \wt{\x}^{\boldsymbol{\ell}_{m,1}}],\\
&\phi_{2m+1}([\m])=\sum_{\mathbf{i}\in\I_m} \sum_{\boldsymbol{\ell}\in\J_{\mathbf i}}
[\boldsymbol{\lambda}_{\mathbf{n}- \mathbf{i}}\m
x^{|\mathbf{i}-\boldsymbol{\ell}|-m} \ot \wt{\x}^{\boldsymbol{\ell}_{m,1}}\ot x],\\
&\psi_{2m}([\m\ot\x^{\mathbf{i}_{1,2m}}])=[\m\ra{x^{i_1+i_2}}\cdots\ra{x^{i_{2m-1}+i_{2m}}}],\\
&\psi_{2m+1}([\m\ot\x^{\mathbf{i}_{1,2m+1}}])\!=\!\!\sum_{\ell=0}^{i_{2m+1}-1}
\!\!\![x^{i_{2m+1}-\ell-1}\m \ra{x^{i_1\!+i_2}} \cdots \ra{x^{i_{2m-1}\!+i_{2m}}}x^{\ell}],
\end{align*}
where $[\m\ot \x^{\mathbf{i}_{1r}}]$ denotes the class of $\m\ot \x^{\mathbf{i}_{1r}}$ in $M\ot
\ra{A}^{\ot^r}\ot$, are chain morphisms which are inverse one of each other up to homotopy.

\smallskip

\item Let
$$
\qquad\beta\colon M \ot_{A^e} A\ot \ra{A}^{\ot^{r+1}}\ot A\to M\ot\ra{A}^{\ot^{r+1}}\ot
$$
be the map defined by
$$
\beta_{r+1}(m\ot \x_{0,r+2}) = [x_{r+2}mx_0\ot \x_{1,r+1}].
$$
The composition $\psi_*\phi_*$ gives the identity map, and the family of maps
$$
\qquad\omega_{*+1}\colon M\ot \ra{A}^{\ot^*}\ot \to M\ot \ra{A}^{\ot^{*+1}}\ot,
$$
defined by
$$
\omega_{r+1}([m\ot \x]) = \beta \bigl(m\ot_{A^e}\omega'_{r+1}(1\ot\x\ot 1)\bigr),
$$
is an homotopy from $\phi_*\psi_*$ to the identity map.
\end{enumerate}
\end{teo}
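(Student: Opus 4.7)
The overall plan is functorial: apply the functor $M\ot_{A^e}(-)$ to the $\Upsilon$-projective resolution $C_S'(A)$ of Theorem~\ref{teorema 1.1}, to the comparison maps $\phi'_*,\psi'_*$ of Theorem~\ref{teorema 1.2}, and to the contracting homotopy $\omega'_*$ of Proposition~\ref{prop 1.3}. All three parts will then follow from two natural identifications:
\[
M\ot_{A^e}A^2_{\alpha^r}\cong M/[M,K]_{\alpha^r},\qquad M\ot_{A^e}(A\ot \ra{A}^{\ot r}\ot A)\cong M\ot \ra{A}^{\ot r}\ot.
\]
The first sends $m\ot_{A^e}(a\ot b)$ to $[bma]$: the defining relation $a\alpha^r(\lambda)\ot b=a\ot\lambda b$ of $A_{\alpha^r}\ot_K A$ becomes, after $\ot_{A^e}$, exactly the twisted-commutator relation $m\alpha^r(\lambda)-\lambda m\in[M,K]_{\alpha^r}$. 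The second is the map $\beta$ appearing in the statement.

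For part (1), applying $M\ot_{A^e}(-)$ to $d'_{2m+1}(1\ot 1)=x\ot 1-1\ot x$ and to $d'_{2m}(1\ot 1)=\sum_{i,\ell}\lambda_{n-i}x^{\ell}\ot x^{i-\ell-1}$ yields, via the first identification, $[\m x-x\m]$ and $\sum_{i,\ell}[x^{i-\ell-1}\m\lambda_{n-i}x^{\ell}]$ respectively. One rewrites the latter as $\sum_{i,\ell}[\lambda_{n-i}x^{i-\ell-1}\m x^{\ell}]$ using that $\alpha$ fixes $\lambda_{n-i}$, so that $x$ and $\lambda_{n-i}$ commute in $A$ and $\alpha^{mn}(\lambda_{n-i})=\lambda_{n-i}$, giving $[m''\lambda_{n-i}]=[\lambda_{n-i}m'']$ in $M/[M,K]_{\alpha^{mn}}$ for every $m''\in M$. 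Since $C_S'(A)$ is a $\Upsilon$-projective resolution of $A$, the homology of $M\ot_{A^e}C_S'(A)=C^S(A,M)$ computes $\H^K_*(A,M)$.

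For parts (2) and (3), applying $M\ot_{A^e}(-)$ to $\phi'_*$, $\psi'_*$ and $\omega'_*$, and reading off the result through the identifications above (again moving each $\boldsymbol{\lambda}_{\mathbf{n}-\mathbf{i}}$ past $\m$ in the cyclic quotient), produces exactly the formulas in the statement. The identity $\psi_*\phi_*=\id$ is the functorial image of $\psi'_*\phi'_*=\id$, and the homotopy relation $b\omega+\omega b=\phi\psi-\id$ is obtained by applying $M\ot_{A^e}(-)$ to the relation $b'\omega'+\omega' b'=\phi'\psi'-\id$ of Proposition~\ref{prop 1.3}. The main obstacle is not conceptual but careful bookkeeping: checking at each degree that the moves of $\boldsymbol{\lambda}_{\mathbf{n}-\mathbf{i}}$ and powers of $x$ through $\m$ respect the appropriate relation $[M,K]_{\alpha^r}$, and that the cyclic quotient $M\ot\ra{A}^{\ot r}\ot$ accommodates all the rearrangements needed to match the given formulas precisely.
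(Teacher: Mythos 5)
Your proposal is correct and follows essentially the same route as the paper: apply $M\ot_{A^e}(-)$ to the resolution $C_S'(A)$, the comparison maps and the homotopy, use the identification $\m\ot_{A^e}(a\ot b)\mapsto[b\m a]$ of $M\ot_{A^e}A^2_{\alpha^j}$ with $M/[M,K]_{\alpha^j}$, and move $\lambda_{n-i}$ past $\m$ exactly as the paper does (first commuting it with $x^{\ell}$ in $A$, then using $\alpha^{mn}(\lambda_{n-i})=\lambda_{n-i}$ in the twisted quotient). Nothing further is needed.
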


\begin{proof} For the first item, apply the functor $M\ot_{A^e}-$ to the resolution
$C_S'(A)$, and use the identification
\[
\xymatrix@R-4ex{M\ot_{A^e} A^2_{\alpha^j}\ar[r]^-{\cong}& \frac{M}{[M,K]_{\alpha^j}}
\\
\m\ot (a\ot b)\ar@{|->}[r] & [b\m a].}
\]
For instance

\begin{align*}
d_{2m}([\m]) & = \sum_{i=1}^n  \sum_{\ell = 0}^{i-1} [x^{i-\ell-1}\m\lambda_{n-i}x^{\ell}]\\
& = \sum_{i=1}^n  \sum_{\ell = 0}^{i-1} [x^{i-\ell-1}\m x^{\ell}\lambda_{n-i}]\\
& = \sum_{i=1}^n  \sum_{\ell = 0}^{i-1} [\lambda_{n-i} x^{i-\ell-1}\m x^{\ell}].
\end{align*}
Let $\psi_*$ and $\phi_*$ be the morphism induced by the comparison maps $\psi'_*$ and $\phi'_*$.
The second and third item follow easily from this definition in the same manner.
\end{proof}

When $M = A$ we will write $C^S(A)$ instead of $C^S(A,M)$.

\subsection{Explicit computations}
The aim of this subsection is to compute the Hochschild homology of $A$ with coefficients in $A$,
under suitable hypothesis.

\begin{teo}\label{teorema 2.2} Let $C^S_r(A)$ denote the $r$-th module of $C^S(A)$. If
there exists $\breve{\lambda}\in \Z(K)$ such that
\begin{itemize}

\smallskip

\item $\alpha^n(\breve{\lambda})=\breve{\lambda}$,

\smallskip

\item $\breve{\lambda}-\alpha^i(\breve{\lambda})$ is invertible in $K$ for $1\le i<n$,

\end{itemize}
then
\[
C^S_r(A) = \begin{cases} \frac{K}{[K,K]_{\alpha^{mn}}}&\text{if $r=2m$,}\\
\frac{K}{[K,K]_{\alpha^{(m+1)n}}}x^{n-1}& \text{if $r=2m+1$.}\end{cases}
\]
\end{teo}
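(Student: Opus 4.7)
The plan is to exploit $\breve\lambda$ via a direct sum decomposition of each term $C^S_r(A)=A/[A,K]_{\alpha^j}$, where $j=mn$ for $r=2m$ and $j=mn+1$ for $r=2m+1$. Using the left $K$-basis $\{1,x,\dots,x^{n-1}\}$ of $A$ recalled in Section~1, I would first split $A=\bigoplus_{i=0}^{n-1}Kx^i$, and then use the relation $x^i\lambda=\alpha^i(\lambda)x^i$ to carry out the key computation
$$[\kappa x^i,\lambda]_{\alpha^j}=\kappa x^i\alpha^j(\lambda)-\lambda\kappa x^i=(\kappa\alpha^{i+j}(\lambda)-\lambda\kappa)x^i=[\kappa,\lambda]_{\alpha^{i+j}}\,x^i.$$
Since each twisted commutator lands in a single summand $Kx^i$, the $k$-submodule $[A,K]_{\alpha^j}$ splits along the direct sum, giving
$$C^S_r(A)=\bigoplus_{i=0}^{n-1}\bigl(K/[K,K]_{\alpha^{i+j}}\bigr)\,x^i.$$

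The heart of the argument is the sublemma: \emph{$K/[K,K]_{\alpha^s}=0$ whenever $s\not\equiv 0\pmod n$.} The hypothesis $\alpha^n(\breve\lambda)=\breve\lambda$ iterates to give $\alpha^s(\breve\lambda)=\alpha^{s\bmod n}(\breve\lambda)$, so $\mu:=\breve\lambda-\alpha^s(\breve\lambda)$ is invertible in $K$ by assumption. Because $\breve\lambda$ is central,
$$[\kappa,\breve\lambda]_{\alpha^s}=\kappa\alpha^s(\breve\lambda)-\breve\lambda\kappa=\kappa(\alpha^s(\breve\lambda)-\breve\lambda)=-\kappa\mu,$$
and hence every $\kappa'\in K$ equals $-[\kappa'\mu^{-1},\breve\lambda]_{\alpha^s}$, placing it in $[K,K]_{\alpha^s}$.

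Combining the two steps is then a direct substitution. For $r=2m$ we have $i+j\equiv i\pmod n$, so only the summand $i=0$ escapes the sublemma, leaving $K/[K,K]_{\alpha^{mn}}$. For $r=2m+1$ we have $i+j\equiv i+1\pmod n$, so only $i=n-1$ survives, yielding $K/[K,K]_{\alpha^{(m+1)n}}\,x^{n-1}$, as claimed. No serious obstacle is anticipated; the only delicate point is that the sublemma truly uses centrality of $\breve\lambda$ (not merely $\alpha^n$-invariance), since it is centrality that lets $\breve\lambda\kappa$ be rewritten as $\kappa\breve\lambda$, thereby collapsing the commutator into the single scalar factor $\mu$ on which the invertibility hypothesis can act.
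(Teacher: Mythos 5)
Your proposal is correct and follows essentially the same route as the paper: decompose $A/[A,K]_{\alpha^j}$ along the left $K$-basis $\{1,x,\dots,x^{n-1}\}$ via the identity $[\kappa x^i,\lambda]_{\alpha^j}=[\kappa,\lambda]_{\alpha^{i+j}}x^i$, and then kill every summand with $i+j\not\equiv 0\pmod n$ by showing $[K,K]_{\alpha^s}=K$ using the centrality of $\breve\lambda$ together with the invertibility of $\breve\lambda-\alpha^{s\bmod n}(\breve\lambda)$. The only cosmetic difference is that you state the vanishing of the off-diagonal summands as an explicit sublemma, which the paper proves inline.
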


\begin{proof} By item~(1) of Theorem~\ref{teorema 2.1} we know that
$$
C^S_r(A) = \begin{cases} \frac{A}{[A,K]_{\alpha^{mn}}}&\text{if $r=2m$,}\\
\frac{A}{[A,K]_{\alpha^{mn+1}}} & \text{if $r=2m+1$.}\end{cases}
$$
Moreover
$$
[a,\lambda]_{\alpha^r}= \sum_{i=0}^{n-1}[\lambda'_i,\lambda] _{\alpha^{r+i}}x^i
$$
for each $a=\sum_{i=0}^{n-1} \lambda'_ix^i\in A$ and $\lambda\in K$. Hence, it will be
sufficient to check that if $i$ is not congruent to $0$ module $n$, then
$[K,K]_{\alpha^{mn+i}}=K$. But this follows immediately from the fact that
$\alpha^i(\breve{\lambda}) - \breve{\lambda}$ is invertible if $i$ is not congruent to $0$
module $n$ and
\[
[\lambda',\breve{\lambda}]_{\alpha^{mn+i}}= \lambda'\alpha^{mn+i}(\breve{\lambda}) -
\breve{\lambda}\lambda' = \lambda'(\alpha^i(\breve{\lambda})-\breve{\lambda}),
\]
since $\breve{\lambda}\in \Z(K)$ and $\alpha^n(\breve{\lambda}) = \breve{\lambda}$.
\end{proof}

\begin{teo}\label{teorema 2.3} Under the hypothesis of Theorem~\ref{teorema 2.2}, the
boundary maps of $C^S(A)$ are given by
\begin{align*}
& d_{2m+1}([\lambda]x^{n-1} )= [(\alpha(\lambda)-\lambda)\lambda_n],\\
& d_{2m+2}([\lambda])= \Biggl[\sum_{\ell=0}^{n-1} \alpha^{\ell}(\lambda) \Biggr]x^{n-1},
\end{align*}
for all $m\ge 0$. Consequently, if $\lambda_n=0$, then the odd boundary maps are zero.
\end{teo}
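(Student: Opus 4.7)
The plan is to substitute directly into the boundary formulas of Theorem~\ref{teorema 2.1} and then invoke the module simplifications used to prove Theorem~\ref{teorema 2.2}. Recall from that proof that in $A/[A,K]_{\alpha^j}$ any class $[\lambda'x^i]$ vanishes as soon as $i\not\equiv -j\pmod n$, because $[\lambda'x^i,\breve{\lambda}]_{\alpha^j}=\lambda'(\alpha^i(\breve{\lambda})-\breve{\lambda})x^i$ and the scalar factor is invertible in $K$. In particular only the $x^0$-component survives in $C^S_{2m}(A)$ and only the $x^{n-1}$-component in $C^S_{2m+1}(A)$, so the whole computation reduces to extracting those two components from the boundary expressions.

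For $d_{2m+1}([\lambda]x^{n-1})$ I would take $\mathfrak{m}=\lambda x^{n-1}$ and use $x\lambda=\alpha(\lambda)x$ to get $\mathfrak{m}x-x\mathfrak{m}=(\lambda-\alpha(\lambda))x^n$. Since $f(x)=0$ in $A$, the substitution $x^n=-\sum_{i=1}^n\lambda_i x^{n-i}$ turns this into $-\sum_{i=1}^n(\lambda-\alpha(\lambda))\lambda_i x^{n-i}$. All summands with $i<n$ carry a nonzero power of $x$ and are killed in $C^S_{2m}(A)$, leaving the single class $[(\alpha(\lambda)-\lambda)\lambda_n]$.

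For $d_{2m+2}([\lambda])$ I would expand the defining double sum and move $\lambda$ past each monomial via $x^{i-\ell-1}\lambda=\alpha^{i-\ell-1}(\lambda)x^{i-\ell-1}$, so that each summand collapses to $\lambda_{n-i}\alpha^{i-\ell-1}(\lambda)x^{i-1}$. Only the summands with $i=n$ (where $\lambda_0=1$) survive in $C^S_{2m+1}(A)$, and the inner sum becomes $\sum_{\ell=0}^{n-1}\alpha^{n-1-\ell}(\lambda)x^{n-1}$; the reindexing $\ell\mapsto n-1-\ell$ gives the displayed formula. The final consequence that $\lambda_n=0$ forces the odd boundary maps to vanish is then immediate from the formula for $d_{2m+1}$.

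The only point requiring some care, rather than a substantive obstacle, is the order of operations in the odd case: the relation $f(x)=0$ must be applied \emph{after} forming the commutator $\mathfrak{m}x-x\mathfrak{m}$, otherwise the clean power-of-$x$ accounting used to kill the superfluous classes breaks down. Beyond that, the argument is a bookkeeping exercise with the Ore relation.
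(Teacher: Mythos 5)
Your argument is correct and is essentially the paper's own proof: substitute into the boundary formulas of Theorem~\ref{teorema 2.1}, push $\lambda$ through powers of $x$ with the Ore relation (and use $x^n=-\sum_{i=1}^n\lambda_ix^{n-i}$ in the odd case), then invoke the identification of Theorem~\ref{teorema 2.2} to discard every component except $x^0$ in even degree and $x^{n-1}$ in odd degree. The only blemish is the displayed twisted commutator, which should read $[\lambda'x^i,\breve{\lambda}]_{\alpha^j}=\lambda'(\alpha^{i+j}(\breve{\lambda})-\breve{\lambda})x^i$ --- the exponent $i+j$, not $i$, is what makes your vanishing criterion $i\not\equiv -j\pmod n$ come out right --- but this is already contained in Theorem~\ref{teorema 2.2} and does not affect the proof.
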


\begin{proof} By item~(1) of Theorem~\ref{teorema 2.1},
\[
d_{2m+1}([\lambda]x^{n-1} )=[\lambda x^n-x\lambda x^{n-1}]=[(\lambda-\alpha(\lambda))x^n] =
[(\alpha(\lambda)-\lambda)\lambda_n],
\]
where the last equality follows from Theorem~\ref{teorema 2.2}. Again by item~(1) of
Theorem~\ref{teorema 2.1},
\begin{align*}
d_{2m+2}([\lambda]) &=\sum_{i=1}^{n}\sum_{\ell=0}^{i-1} [\lambda_{n-i}x^{i-\ell-1}\lambda
x^{\ell}]\\
&=\sum_{i=1}^{n}\sum_{\ell=0}^{i-1}[\lambda_{n-i}\alpha^{i-l-1}(\lambda)x^{i-1}]\\
&= \Biggl[\sum_{\ell=0}^{n-1} \alpha^{n-\ell-1}(\lambda) \Biggr]x^{n-1},
\end{align*}
where the last equality follows again from Theorem~\ref{teorema 2.2}.
\end{proof}

Theorem~\ref{teorema 2.3} implies that $\lambda\lambda_n-\alpha^n(\lambda)\lambda_n\in
[K,K]_{\alpha^{mn}}$ for all $\lambda\in K$ and $m\ge 0$. Indeed, this can be proved directly from
the hypothesis at the beginning of this paper and then it is true with full generality. In fact,
$$
\lambda\lambda_n-\alpha^n(\lambda)\lambda_n = \lambda\lambda_n-\lambda_n\lambda =
\lambda\alpha^{mn}(\lambda_n)-\lambda_n\lambda.
$$

\begin{coro}\label{corolario 2.4} Under the hypothesis of Theorem~\ref{teorema 2.2},
\begin{align*}
& \HH^K_0(A)=\frac{K}{[K,K]+\ima(\alpha-\id)\lambda_n},\\
& \HH^K_{2m+1}(A)=\frac{\left\{\lambda\in K:(\alpha(\lambda)-\lambda)\lambda_n\in
[K,K]_{\alpha^{mn}}\right\}}{[K,K]_{\alpha^{(m+1)n}}+\ima\left(\sum_{\ell=0}^{n-1}\alpha^{\ell}
\right)}x^{n-1},\\
& \HH^K_{2m+2}(A)= \frac{\left\{\lambda\in K: \sum_{\ell=0}^{n-1}\alpha^{\ell}(\lambda) \in
[K,K]_{\alpha^{(m+1)n}}\right\}} {[K,K]_{\alpha^{(m+1)n}}+\ima(\alpha-\id)\lambda_n}.
\end{align*}
\end{coro}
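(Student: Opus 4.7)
The plan is a direct bookkeeping exercise: by Theorem~\ref{teorema 2.2}, the complex $C^S(A)$ has terms $K/[K,K]_{\alpha^{mn}}$ in degree $2m$ and $K/[K,K]_{\alpha^{(m+1)n}}\cdot x^{n-1}$ in degree $2m+1$, and by Theorem~\ref{teorema 2.3} the boundaries are the well-defined maps
\[
d_{2m+1}([\lambda]x^{n-1}) = [(\alpha(\lambda)-\lambda)\lambda_n],\qquad
d_{2m+2}([\lambda]) = \Bigl[\sum_{\ell=0}^{n-1}\alpha^\ell(\lambda)\Bigr]x^{n-1}.
\]
So the only thing to do is to read off the kernels and cokernels of these two families of maps at each position, being careful about the quotients.

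For $\HH^K_0(A)$, I would note that $C^S_0(A)=K/[K,K]$ and $d_1$ has image $(\alpha-\id)(K)\lambda_n$ modulo $[K,K]$; quotienting yields $K/([K,K]+\ima(\alpha-\id)\lambda_n)$, which is the first formula.

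For $\HH^K_{2m+1}(A)$, I would describe $\ker d_{2m+1}$ by lifting to $K$: it consists of the classes $[\lambda]x^{n-1}$ with $(\alpha(\lambda)-\lambda)\lambda_n\in[K,K]_{\alpha^{mn}}$, modulo $[K,K]_{\alpha^{(m+1)n}}$. Since $\ima d_{2m+2}$ is generated by the classes $[\sum_\ell\alpha^\ell(\lambda)]x^{n-1}$, dividing gives the second formula. The third formula is analogous: $\ker d_{2m+2}$ is the set of $\lambda\in K$ with $\sum_\ell\alpha^\ell(\lambda)\in[K,K]_{\alpha^{(m+1)n}}$ taken modulo $[K,K]_{\alpha^{(m+1)n}}$, and the image of $d_{2m+3}$ is $(\alpha-\id)(K)\lambda_n$ in the same quotient.

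The only mildly delicate point is that the subsets denominator $[K,K]_{\alpha^{(m+1)n}}+\ima(\sum_\ell\alpha^\ell)$ (resp.\ $[K,K]_{\alpha^{(m+1)n}}+\ima(\alpha-\id)\lambda_n$) is automatically contained in the numerator describing $\ker d_{2m+1}$ (resp.\ $\ker d_{2m+2}$); this is just the composition $d_{2m+1}d_{2m+2}=0$ (resp.\ $d_{2m+2}d_{2m+3}=0$), which holds because $(C^S(A),d_*)$ is a complex, so no further verification is required. Everything else is a mechanical identification, so there is no real obstacle beyond keeping the indices straight.
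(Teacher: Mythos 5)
Your proposal is correct and is exactly the intended argument: the paper states Corollary~\ref{corolario 2.4} without proof precisely because it is the mechanical kernel-modulo-image computation from Theorems~\ref{teorema 2.2} and~\ref{teorema 2.3} that you carry out, and your handling of the well-definedness of the denominators (via $d_*d_{*+1}=0$, which the paper itself notes explicitly for the inclusion $\ima(\alpha-\id)\lambda_n\subseteq\ker(\sum_{\ell}\alpha^{\ell})$ just after Theorem~\ref{teorema 2.3}) is the only point requiring care. Nothing is missing.
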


\begin{rem}\label{remark 2.5} Note that the results in Theorems~\ref{teorema 2.2} and
\ref{teorema 2.3}, and Corollary~\ref{corolario 2.4} do not depend on $f$, with the exception of
its degree $n$ and its independent term $\lambda_n$.
\end{rem}

Assume now that $k$ is a field, the hypothesis of Theorem~\ref{teorema 2.2} are fulfilled and
$\alpha$ is diagonalizable. Let $\omega_1 = 1, \omega_2,\dots,\omega_s$ be the eigenvalues of
$\alpha$ and let $K^{\omega_h}$ be the eigenspace of eigenvalue $\omega_h$. Write
$$
[K,K]^{\omega_h}_{ \alpha^r} = K^{\omega_h}\cap [K,K]_{\alpha^r}.
$$
Note that $1,\lambda_1,\dots,\lambda_n\in K^1$. Moreover, using the properties of
$\breve{\lambda}$ and that $\alpha$ is an algebra endomorphism, it is easy to see that there is a
primitive $n$-th root of $1$ in  $k$ and that all the $n$-th roots of $1$ in $k$ are eigenvalues
of $\alpha$. Consequently, the characteristic of $k$ does not divide~$n$.

\begin{teo}\label{teorema 2.6} The chain complex $C^S(A)$ decomposes as a direct sum $C^S(A) =
\bigoplus_{h=1}^s C^{S,\omega_h}(A)$, where
\[
C_r^{S,\omega_h}(A) = \begin{cases} \frac{K^{\omega_h}}{[K,K]^{\omega_h}_{\alpha^{mn}}}&\text{if
$r=2m$,}\\[8pt]
\frac{K^{\omega_h}}{[K,K]^{\omega_h}_{\alpha^{(m+1)n}}}x^{n-1}& \text{if $r=2m+1$.}\end{cases}
\]
Moreover the boundary maps $d_*^{\omega_h}$ of $C_r^{S,\omega_h}(A)$ are given by:
$$
d_{2m}^{\omega_1}([\lambda]) = \left(\sum_{l=0}^{n-1} \omega_h^l\right)
[\lambda]x^{n-1}\quad\text{and}\quad d_{2m+1}^{\omega_h}([\lambda]x^{n-1}) =
(\omega_h-1)[\lambda\lambda_n].
$$
\end{teo}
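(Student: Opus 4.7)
The plan is to verify two facts: that the module-level decomposition $K/[K,K]_{\alpha^{mn}} = \bigoplus_h K^{\omega_h}/[K,K]^{\omega_h}_{\alpha^{mn}}$ (and its analogue for $r$ odd) holds, and that the boundary maps of Theorem~\ref{teorema 2.3} preserve the eigenspace grading and act on $K^{\omega_h}$ by the scalars $\omega_h - 1$ and $\sum_{\ell=0}^{n-1}\omega_h^\ell$ claimed.

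First I would prove a homogeneity statement for twisted commutators: for any $r \ge 0$ and any eigenvalues $\omega_a, \omega_b$, the subspace $[K^{\omega_a}, K^{\omega_b}]_{\alpha^r}$ is contained in $K^{\omega_a\omega_b}$. This uses that $\alpha$ is a $k$-algebra endomorphism, so $K^{\omega_a} K^{\omega_b} \subseteq K^{\omega_a \omega_b}$, together with the fact that $\alpha^r$ acts on $K^{\omega_b}$ by the scalar $\omega_b^r$; explicitly, for $k_1 \in K^{\omega_a}$ and $k_2 \in K^{\omega_b}$,
\[
k_1 \alpha^r(k_2) - k_2 k_1 = \omega_b^r\, k_1 k_2 - k_2 k_1 \in K^{\omega_a \omega_b}.
\]
Decomposing arbitrary $k_1, k_2$ into their eigenspace components and using the bilinearity of the twisted bracket gives
\[
[K,K]_{\alpha^r} = \sum_h \Bigl(\sum_{\omega_a\omega_b=\omega_h} [K^{\omega_a}, K^{\omega_b}]_{\alpha^r}\Bigr),
\]
where the $h$-th term sits inside $K^{\omega_h}$. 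Since $K = \bigoplus_h K^{\omega_h}$, this outer sum is direct, and its $h$-th summand must coincide with $K^{\omega_h} \cap [K,K]_{\alpha^r} = [K,K]^{\omega_h}_{\alpha^r}$. Applying this with $r = mn$ and $r = (m+1)n$ yields the claimed splittings of the even and odd modules of $C^S(A)$.

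It remains to substitute $\alpha^\ell(\lambda) = \omega_h^\ell \lambda$ into the formulas of Theorem~\ref{teorema 2.3}. For $\lambda \in K^{\omega_h}$ one gets at once
\[
d_{2m+2}([\lambda]) = \Bigl(\sum_{\ell=0}^{n-1}\omega_h^\ell\Bigr)[\lambda]\,x^{n-1}, \qquad d_{2m+1}([\lambda]\,x^{n-1}) = (\omega_h-1)[\lambda\lambda_n],
\]
and since $\lambda_n \in K^{1}$ (because $\alpha(\lambda_n) = \lambda_n$ by the standing assumption on the coefficients of $f$), both right-hand sides lie in $K^{\omega_h}$, so the differentials restrict to $C^{S,\omega_h}(A)$ as asserted. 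The only real obstacle is the homogeneity statement for $[K,K]_{\alpha^r}$: it is crucial there that $\alpha$ is an algebra map, not merely $k$-linear, so that products of homogeneous elements remain homogeneous; once that step is in hand, the boundary computation is just substitution.
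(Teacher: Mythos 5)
Your proof is correct and takes essentially the same route as the paper: its proof of this theorem is a one-line appeal to Theorems~2.2 and~2.3 together with the observation that $\lambda_n\in K^1$, and your homogeneity lemma $[K^{\omega_a},K^{\omega_b}]_{\alpha^r}\subseteq K^{\omega_a\omega_b}$ (giving $[K,K]_{\alpha^r}=\bigoplus_h [K,K]^{\omega_h}_{\alpha^r}$) plus the scalar substitution $\alpha^\ell(\lambda)=\omega_h^\ell\lambda$ is precisely the content the paper leaves implicit.
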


\begin{proof} It follows easily from Theorem~\ref{teorema 2.2} and \ref{teorema 2.3}, since
$\lambda_n\in K^1$.
\end{proof}

\begin{coro}\label{corolario 2.7} Let $\HH^{K,\omega_h}_*(A)$ denote the homology of
$C^{S,\omega_h}(A)$. By Theorem~\ref{teorema 2.6} we know that $\HH^K_*(A) = \bigoplus_{h=1}^s
\HH^{K,\omega_h}_*(A)$. Moreover,

\begin{align*}
& \HH^{K,\omega_h}_0(A) = \begin{cases} \frac{K^1}{[K,K]^1} & \text{if $h = 1$,} \\[8pt]
\frac{K^{\omega_h}}{[K,K]^{\omega_h} + K^{\omega_h}\lambda_n} & \text{if $h \ne 1$,}\end{cases}\\[5pt]
& \HH^{K,\omega_h}_{2m+1}(A) = \begin{cases} \frac{\left\{\lambda \in
K^{\omega_h}:\lambda\lambda_n\in
[K,K]^{\omega_h}_{\alpha^{mn}}\right\}}{[K,K]^{\omega_h}_{\alpha^{(m+1)n}}}x^{n-1} & \text{if $h
\ne 1$ and
$\omega_h^n = 1$,}\\[8pt] 0 &\text{otherwise,}\end{cases}\\[5pt]
& \HH^{K,\omega_h}_{2m+2}(A) = \begin{cases}
\frac{K^{\omega_h}}{[K,K]^{\omega_h}_{\alpha^{(m+1)n}}
+K^{\omega_h}\lambda_n} & \text{if $h \ne 1$ and $\omega_h^n = 1$,}\\[8pt] 0 &\text{otherwise,} \end{cases}
\end{align*}

\end{coro}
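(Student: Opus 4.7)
My plan is to reduce the computation of $\HH^{K,\omega_h}_*(A)$ to a short case analysis on $\omega_h$, reading everything off the boundary formulas of Theorem~\ref{teorema 2.6}. Set $\sigma_h := \sum_{\ell=0}^{n-1}\omega_h^{\ell}$. By Theorem~\ref{teorema 2.6}, the even boundary $d_{2m+2}^{\omega_h}$ is multiplication by $\sigma_h$ between the quotients $K^{\omega_h}/[K,K]^{\omega_h}_{\alpha^{(m+1)n}}$ and $K^{\omega_h}/[K,K]^{\omega_h}_{\alpha^{(m+1)n}}\cdot x^{n-1}$, while the odd boundary $d_{2m+1}^{\omega_h}$ factors as multiplication by $\omega_h-1$ followed by the map $[\lambda]x^{n-1}\mapsto[\lambda\lambda_n]$. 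Thus the homology is governed entirely by the behaviour of the two scalars $\omega_h-1$ and $\sigma_h$.

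I will first tabulate when each scalar is invertible: $\omega_h-1=0$ iff $h=1$; and $\sigma_1=n$, while $\sigma_h=(\omega_h^n-1)/(\omega_h-1)$ for $h\ne 1$. Combined with the remark preceding Theorem~\ref{teorema 2.6} that $\mathrm{char}(k)\nmid n$, this shows that $\sigma_h$ is an invertible scalar except in the single case $h\ne 1$ and $\omega_h^n=1$, in which case $\sigma_h=0$.

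The computation then splits into three cases. If $h=1$, the odd boundary vanishes and the even boundary is multiplication by the invertible scalar $n$, which is bijective; this kills every positive-degree homology and leaves $\HH^{K,\omega_1}_0(A)=K^1/[K,K]^1_{\alpha^0}=K^1/[K,K]^1$. If $h\ne 1$ with $\omega_h^n\ne 1$, both scalars are invertible, so every $d_{2m+2}^{\omega_h}$ is a bijection and again the positive-degree homology vanishes; in degree zero the image of $d_1^{\omega_h}$ is $(\omega_h-1)K^{\omega_h}\lambda_n=K^{\omega_h}\lambda_n$ modulo $[K,K]^{\omega_h}$, yielding the stated formula. Finally, if $h\ne 1$ with $\omega_h^n=1$, the even boundary is identically zero while $\omega_h-1$ is invertible, so $\HH^{K,\omega_h}_{2m+1}(A)$ is just $\ker d_{2m+1}^{\omega_h}$, and $\HH^{K,\omega_h}_{2m+2}(A)$ is $C_{2m+2}^{S,\omega_h}(A)$ modulo $\ima d_{2m+3}^{\omega_h}=K^{\omega_h}\lambda_n$; unravelling gives precisely the quotients in the statement. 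There is no serious obstacle here; the only subtle point is making sure that $\mathrm{char}(k)\nmid n$ is invoked precisely where needed, namely to ensure that $\sigma_1=n$ is invertible in the case $h=1$.
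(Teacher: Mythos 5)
Your argument is correct and is exactly the intended derivation: the paper states Corollary~\ref{corolario 2.7} without proof as an immediate consequence of Theorem~\ref{teorema 2.6}, and your case analysis on the invertibility of the scalars $\omega_h-1$ and $\sigma_h=\sum_{\ell=0}^{n-1}\omega_h^{\ell}$ (using $\mathrm{char}(k)\nmid n$ for $h=1$) is precisely how one reads the homology off the boundary formulas there.
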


\noindent Note that if $\alpha^n$ has finite order $v$ (that is $\alpha^{nv} = \id$ and
$\alpha^{nj}\ne \id$ for $0<j<v$), then
$$
\HH^{K,\omega_h}_{2m+1}(A) = \HH^{K,\omega_h}_{2(m+v)+1}(A)\quad\text{and}\quad
\HH^{K,\omega_h}_{2m+2}(A) = \HH^{K,\omega_h}_{2(m+v)+2}(A)
$$
for all $m\ge 0$.

\begin{ex}\label{ejemplo 2.8} Let $k$ be a field, $K = k[G]$ the group $k$-algebra of a finite
group $G$ and $\chi\colon G\to k^{\times}$ a character, where $k^{\times}$ is the group of unities
of $k$. Let $\alpha\colon K\to K$ be the automorphism defined by $\alpha(g) =\chi(g)g$ and let $f
= x^n + \lambda_1 x^{n-1}+\cdots +\lambda_n\in K[x]$ be a monic polynomial whose coefficients
satisfy the hypothesis required in the introduction. Assume that there exists $g_1\in\Z(G)$ such
that $\chi(g_1)$ is a primitive $n$-th root of $1$. In this section we apply the results obtained
in Section~2 to compute the Hochschild homology of $A = K[x,\alpha]/\langle f\rangle$ relative to
$K$ (if the characteristic of $k$ is relative prime to the order of $G$, then $k[G]$ is a
separable $k$-algebra and so, by \cite[Theorem 1.2]{G-S}, $\HH^K_*(A)$ coincides with the absolute
Hochschild homology $\HH_*(A)$ of $A$). Note that the hypothesis of Theorem~\ref{teorema 2.2} are
fulfilled, taking $\breve \lambda = g_1$. In particular the homological behavior of $A$ is
independent of the polynomial $f$, with the exception of its degree and its independent term.
Since $\alpha$ is diagonalizable Theorem~\ref{teorema 2.6} and Corollary~\ref{corolario 2.7}
apply. In this case
\begin{align*}
&\{\omega_1,\dots,\omega_s\} = \chi(G),\\[5pt]
&K^{\omega_h} = \bigoplus_{\{g\in G:\chi(g) = \omega_h\}} kg,\\[5pt]
&[K,K]^{\omega_h}_{\alpha^j} = \sum_{\{g_1,g_2\in G:\chi(g_1g_2) = \omega_h\}}
k(\chi^j(g_2)g_1g_2- g_2g_1).
\end{align*}
As a concrete example take the Dihedral group $D_{2u}$. That is the group generated by $g,h$ and
the relations $g^u = h^2 = 1$ and $hg=g^{-1}h$. Consider the character $\chi\colon D_{2u}\to \C$
defined by $\chi(g^jh^l) = (-1)^l$. Let $A$ be the monogenic extension of $K = \C[D_{2u}]$
associated with $\alpha$ and the polynomial $f = x^2$. Direct computations show that
\begin{align*}
&K^1 = \C[\langle g\rangle],\\
& K^{-1} = \C[\langle g\rangle]h,\\
&[K,K]^1 = \bigoplus_{j=1}^{[(u-1)/2]} \C(g^j-g^{u-j}),\\
& [K,K]^{-1} = \C[\langle g\rangle](g^2-1)h.
\end{align*}
Hence, applying Corollary~\ref{corolario 2.7}, we obtain
\begin{align*}
&\HH_0(A) = \frac{\C[\langle g\rangle]}{\bigoplus_{j=1}^{[(u-1)/2]} \C(g^j-g^{u-j})} \oplus
\frac{\C[\langle g\rangle]h}{\C[\langle g\rangle](g^2-1)h},\\
&\HH_{2m+1}(A) = \frac{\C[\langle g\rangle]h}{\C[\langle g\rangle](g^2-1)h} x,\\
&\HH_{2m+2}(A) = \frac{\C[\langle g\rangle]h}{\C[\langle g\rangle](g^2-1)h}.
\end{align*}

\end{ex}

Next we consider another situation in which the cohomology of $A$ can be compute. The following
results are very closed to the ones valid in the commutative setting.

\begin{teo}\label{teorema 2.9} If $\alpha$ is the identity map, then
$$
C^S_r(A) = \frac{K}{[K,K]}\oplus \frac{K}{[K,K]} x\oplus \cdots \oplus \frac{K}{[K,K]}x^{n-1} =
\frac{A}{[A,A]}.
$$
Moreover, the odd coboundary maps $d_{2m+1}$ of $C^S(A)$ are zero, and the even coboundary maps
$d_{2m}$ takes $[a]$ to $[f'a]$.
\end{teo}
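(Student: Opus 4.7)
The plan is to exploit the near-commutativity that collapses out of the hypotheses when $\alpha = \id$. The relation $x\lambda = \alpha(\lambda)x$ becomes $x\lambda = \lambda x$, so $x$ commutes with $K$; and the requirement $\lambda_i\lambda = \alpha^i(\lambda)\lambda_i$ simply says each $\lambda_i$ lies in $\cent(K)$. It follows that $x$ and each $\lambda_i$ are central in the Ore extension $B = K[x,\alpha]$, hence also in the quotient $A$, and so in particular
\[
f'(x) = \sum_{i=0}^{n-1}(n-i)\lambda_i x^{n-i-1}
\]
is central in $A$. This observation will drive everything that follows.

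To identify the modules $C^S_r(A)$, I would first note that $[A,K]_{\alpha^j} = [A,K]$ for every $j$, since the twist is now trivial. Using the left $K$-basis $\{1,x,\dots,x^{n-1}\}$ and the fact that $x$ commutes with $K$, a direct computation with $a = \sum_i \mu_i x^i$ gives $[A,K] = \bigoplus_{i=0}^{n-1} [K,K]x^i$. Moreover, since $x$ is central in $A$, every bracket $[a,b]$ with $b = \sum_j \nu_j x^j$ reduces to $\sum_j [a,\nu_j]x^j$, showing $[A,A] \subseteq [A,K]$; the reverse inclusion is trivial. Substituting into Theorem~\ref{teorema 2.1}(1) yields the asserted equality $C^S_r(A) = A/[A,A]$ in every degree.

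For the differentials I would simply plug into the formulas of Theorem~\ref{teorema 2.1}(1). The odd boundary $d_{2m+1}([a]) = [ax - xa]$ vanishes because $x$ is central in $A$. For $d_{2m}$, the centrality of $x$ collapses the inner sum over $\ell$ into a factor of $i$, and the centrality of $\lambda_{n-i}$ lets it be pulled past $a$; after the reindexing $j = n-i$ and recalling that $\lambda_0 = 1$, the resulting sum is precisely $a$ multiplied by $f'$, which, being central, may stand on either side, giving $d_{2m}([a]) = [f'a]$.

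There is no serious obstacle here: once the centrality of $x$ and of the $\lambda_i$ is in hand, each assertion reduces to a short manipulation. The step most deserving of care is the equality $[A,K] = [A,A]$, since this is what forces every boundary group of $C^S(A)$ to coalesce into the single abelianization $A/[A,A]$; the direct sum decomposition $A = \bigoplus_{i} Kx^i$ together with the centrality of $x$ in $A$ gives it in a few lines.
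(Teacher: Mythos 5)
Your proof is correct and is exactly the direct verification the paper has in mind (its own proof is just ``This is immediate''): centrality of $x$ and of the $\lambda_i$ when $\alpha=\id$ gives $[A,K]_{\alpha^j}=[A,K]=\bigoplus_i[K,K]x^i=[A,A]$ and collapses the formulas of Theorem~\ref{teorema 2.1}(1) to $0$ and $[f'a]$.
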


\begin{proof} This is immediate.
\end{proof}

\begin{coro}\label{corolario 2.10} If $\alpha$ is the identity map, then
\begin{align*}
& \H^K_0(A) = \frac{A}{[A,A]},\\
& \H^K_{2m+1}(A) = \frac{A}{[A,A]+f'A} ,\\
& \H^K_{2m+2}(A) = \frac{([A,A]:f')}{[A,A]},
\end{align*}
where $([A,A]:f') = \left\{a\in A: f'a\in [A,A]\right\}$.
\end{coro}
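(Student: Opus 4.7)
The plan is to read the homology directly off the very simple complex produced by Theorem~\ref{teorema 2.9}. Under the hypothesis $\alpha = \id$, that theorem gives a $2$-periodic description of $C^S(A)$: every module is $A/[A,A]$, all odd differentials vanish, and every even differential is left multiplication by $f'$ on classes (meaning the map $[a]\mapsto [f'a]$). So the cycle-and-boundary analysis collapses to three elementary quotient computations, one per residue class of the degree modulo the obvious $2$-periodicity (with the bottom degree handled separately because there is nothing below it).

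First I would handle $\H^K_0(A)$. Since $d_1 = 0$, we have $\H^K_0(A) = C^S_0(A)/\ima d_1 = A/[A,A]$, giving the first formula immediately.

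Next I would treat the odd degrees. For $r = 2m+1$ with $m\ge 0$, the differential $d_{2m+1}$ is zero, so $\ker d_{2m+1} = C^S_{2m+1}(A) = A/[A,A]$. The incoming differential $d_{2m+2}$ sends $[a]$ to $[f'a]$, so its image is exactly the submodule $(f'A + [A,A])/[A,A]$ of $A/[A,A]$. Taking the quotient yields
\[
\H^K_{2m+1}(A) \;=\; \frac{A/[A,A]}{(f'A + [A,A])/[A,A]} \;=\; \frac{A}{[A,A]+f'A}.
\]

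Finally, for the even degrees $r = 2m+2$ with $m\ge 0$, the outgoing differential $d_{2m+3}$ is zero, so the homology equals $\ker d_{2m+2}$. A class $[a]\in A/[A,A]$ is in this kernel iff $[f'a] = 0$ in $A/[A,A]$, i.e.\ iff $f'a\in [A,A]$; this cuts out precisely $([A,A]:f')/[A,A]$, yielding the third formula. No obstacle is foreseen: the whole statement is a mechanical unraveling of quotients and kernels in $A/[A,A]$, and the only substantive input, namely the formula $d_{2m}([a]) = [f'a]$, is supplied by Theorem~\ref{teorema 2.9}.
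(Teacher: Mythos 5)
Your proposal is correct and is exactly the argument the paper intends: Corollary~\ref{corolario 2.10} is read off directly from the $2$-periodic complex of Theorem~\ref{teorema 2.9} (all modules $A/[A,A]$, odd differentials zero, even differentials $[a]\mapsto[f'a]$), and your three quotient computations match the stated homology groups. The paper treats this as immediate, so there is nothing further to add.
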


\subsection{Hochschild homology of rank~$1$ Hopf algebras} Let $k$ be a characteristic zero
field. Recall that $k^{\times}$ denotes the group of unities of $k$. Let $G$ be a finite group and
$\chi\colon G\to k^{\times}$ a character. Assume that there exists $g_1\in \Z(G)$ such that
$\chi(g_1)$ is a primitive $n$-th root of $1$. In this section we compute the Hochschild homology
of $A = k[G][x,\alpha]/\langle x^n-\xi(g_1^n-1)\rangle$ over $k$, where $\xi\in k$ and $\alpha\in
\Aut(k [G])$ is defined by $\alpha(g) = \chi(g)g$. We divide the problem in three cases. The first
and second ones give Hochschild homology of rank~$1$ Hopf algebras. Let $C_n\subseteq k$ be the
set of all $n$-th roots of~$1$.

\medskip

\noindent\bf $\boldsymbol{\xi} \boldsymbol{=} \boldsymbol{0}$.\rm\enspace In this case $A =
K[x,\alpha]/\langle x^n\rangle$, where $K = k[G]$. From Corollary~\ref{corolario 2.7} it follows
that
\begin{align*}
&\HH_0(A) = \frac{K}{[K,K]},\\
&\HH_{2m+1}(A) = \bigoplus_{w\in C_n\setminus\{1\}} \frac{K^w}{[K,K]^w_{\alpha^{(m+1)n}}}
x^{n-1},\\
&\HH_{2m+2}(A) = \bigoplus_{w\in C_n\setminus\{1\}} \frac{K^w}{[K,K]^w_{\alpha^{(m+1)n}}}.
\end{align*}

\medskip

\noindent\bf $\boldsymbol{\xi} \boldsymbol{\ne} \boldsymbol{0}$ and
$\boldsymbol{\chi}^{\mathbf{n}} \boldsymbol{=} \boldsymbol{\id}$.\rm\enspace In this case
$f=x^n-\xi(g_1^n-1)$ satisfies the hypothesis required in the introduction (that is
$\alpha(\xi(g_1^n-1)) = \xi(g_1^n-1)$ and $\xi(g_1^n-1)\lambda = \alpha^n(\lambda)\xi(g_1^n-1)$).
Moreover $K = k[G]$ is separable over $k$ and so, $\HH_*(A) = \HH^K_*(A)$. By
Corollary~\ref{corolario 2.7} we have
\begin{align*}
&\HH_0(A) = \frac{K^1}{[K,K]^1}\oplus \bigoplus_{w\in C_n\setminus\{1\}}
\frac{K^w}{[K,K]^w+K^w(g_1^n-1)},\\
&\HH_{2m+1}(A) = \bigoplus_{w\in C_n\setminus\{1\}} \frac{\{\lambda\in K^w:\lambda(g_1^n-1)\in
[K,K]^w \}}{[K,K]^w}x^{n-1},\\
&\HH_{2m+2}(A) = \bigoplus_{w\in C_n\setminus\{1\}} \frac{K^w}{[K,K]^w + K^w(g_1^n-1)}.
\end{align*}

\medskip

\noindent\bf $\boldsymbol{\xi} \boldsymbol{\ne} \boldsymbol{0}$ and
$\boldsymbol{\chi}^{\mathbf{n}} \boldsymbol{\ne} \boldsymbol{\id}$.\rm\enspace Let $g\in G$ such
that $\chi^n(g)\neq 1$. Since
$$
g^{-1}(x^n-\xi(g_1^n-1))g = \chi^n(g)x^n - \xi(g_1^n -1),
$$
we conclude that the ideal $\cl{x^n-\xi(g_1^n-1)}$ coincides with the ideal $\cl{x^n, g_1^n-1}$.
So, $A = k[G/\cl{g_1^n}][x,\wt{\alpha}]/\cl{x^n}$, where $\wt{\alpha}$ is the automorphism induced
by $\alpha$. We consider now $K=k[G/\cl{g_1^n}]$ and $f=x^n$. These data satisfy the hypothesis of
Theorem~\ref{teorema 2.6}. Moreover the algebra $K=k[G/\cl{g_1^n}]$ is separable over $k$ and so,
$\HH_*(A) = \HH^K_*(A)$. Thus, by Corollary~\ref{corolario 2.7}, we have
\begin{align*}
&\HH_0(A) = \frac{K}{[K,K]},\\
&\HH_{2m+1}(A) = \bigoplus_{w\in C_n\setminus\{1\}} \frac{K^w}{[K,K]^w_{\wt{\alpha}^{(m+1)n}}}
x^{n-1},\\
&\HH_{2m+2}(A) = \bigoplus_{w\in C_n\setminus\{1\}} \frac{K^w}{[K,K]^w_{\wt{\alpha}^{(m+1)n}}}.
\end{align*}

\section{Cyclic homology of $A$}
In this section we get a mixed complex, simpler than the canonical of Tsygan computing the cyclic
homology of a noncommutative monogenic extension $A$.

A simple tensor $a_0\ot \cdots \ot a_r\in A\ot \ra{A}^{\ot^r}$ will be called {\em monomial} if
there exist $\lambda\in K\setminus \{0\}$,  $0\le i_0<n$ and  $1\le i_1,\dots,i_r<n$ such that
$a_0=\lambda x^{i_0}$ and $a_j=x^{i_j}$ for $j>0$. We define the {\em degree} of a monomial tensor
$$
\lambda x^{i_0}\ot \cdots \ot x^{i_r}\in A\ot \ra{A}^{\ot^r},
$$
as $\deg(\lambda x^{i_0}\ot \cdots \ot x^{i_r})=i_0+\cdots + i_r$. Since $1,x,\dots,x^{n-1}$ is a
basis of $\ra{A}$ as a left $K$-module, each element $\a \in A\ot \ra{A}^{\ot^r}$ can be written
in a unique way as a sum of monomial tensors. The {\em degree} $\deg(\a)$, of $\a$, is the maximum
of the degrees of its monomial tensors. Since $[A\ot\ra{A}^{\ot^r},K]$ is an homogeneous
$k$-submodule of $A\ot\ra{A}^{\ot^r}$ we have a well defined concept of degree on
$A\ot\ra{A}^{\ot^r}\ot$. Similarly it can be defined the {\em degree} $\deg(\a)$ of an element
$\a\in A\ot \ra{A}^{\ot^r}\ot A$.

\begin{prop}\label{prop 3.1} It is true that $\deg(\omega_{r+1}(\a))\le \deg(\a)$.
\end{prop}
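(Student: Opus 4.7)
The plan is to induct on $r$, with base case $r=0$ immediate from $\omega_1=0$. For the inductive step I would first establish the stronger resolution-level bound
$$
\deg(\omega'_{r+1}(\mathbf{y}))\le\deg(\mathbf{y})\qquad\text{for all } \mathbf{y}\in A\ot\ra{A}^{\ot^r}\ot A,
$$
and then deduce the proposition from the formula $\omega_{r+1}([m\ot\mathbf{x}])=\beta(m\ot_{A^e}\omega'_{r+1}(1\ot\mathbf{x}\ot 1))$ in Theorem~\ref{teorema 2.1}(3) together with the obvious inequality $\deg(\beta(m\ot_{A^e}\mathbf{z}))\le\deg(m)+\deg(\mathbf{z})$.

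The resolution-level induction rests on the recursion
$$
\omega'_{r+1}(\mathbf{x}\ot 1)=(-1)^r\phi'_r\psi'_r(\mathbf{x}\ot 1)\ot 1-\omega'_r(\mathbf{x})\ot 1
$$
of Proposition~\ref{prop 1.3}. The second summand is controlled by the inductive hypothesis, observing that appending ``$\ot 1$'' never increases degree (it projects the last $A$-factor onto $\ra{A}=A/K$ and appends a $0$-degree $1\in A$). For the first summand I would appeal to the explicit formulas in Theorem~\ref{teorema 1.2}; the pivotal fact is that the notation $\ra{x^j}$ refers to the \emph{quotient} of $x^j$ divided by $f$, so $\ra{x^j}=0$ for $j<n$ and has polynomial degree at most $j-n$ otherwise. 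Consequently the product $\ra{x^{i_1+i_2}}\cdots\ra{x^{i_{2m-1}+i_{2m}}}$ occurring in $\psi'_r$ drops degree by $mn$, which exactly cancels the worst-case increase $|\mathbf{i}|\le mn$ introduced by the monomial $x^{|\mathbf{i}-\boldsymbol{\ell}|-m}\ot\wt{\mathbf{x}}^{\boldsymbol{\ell}_{m,1}}$ in $\phi'_r$; the odd case is parallel after bringing in $\deg(T(x^{i_{2m+1}})/Tx)=i_{2m+1}-1$. Right $A$-linearity of $\omega'_{r+1}$ then extends the bound from inputs of the form $\mathbf{x}\ot 1$ to arbitrary $\mathbf{y}\in A\ot\ra{A}^{\ot^r}\ot A$.

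The main obstacle is the degree bookkeeping for $\phi'_r\psi'_r(\mathbf{x}\ot 1)$ in the subregime where the leading tensor factor has polynomial degree at least $n$ and so must be reduced a second time inside $A$. In that regime the tight bound on that factor becomes the crude $n-1$, and the desired inequality reduces to $n-1+m+|\boldsymbol{\ell}|\le\deg(\mathbf{x})$; using the very condition that triggered the reduction, this amounts to the elementary estimate $|\mathbf{i}|\le mn+1$, which is immediate from $\mathbf{i}\in\I_m$.
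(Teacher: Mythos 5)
Your proof is correct and follows essentially the same route as the paper's: reduce to the resolution level, use the recursion $\omega'_{r+1}(\x\ot 1)=(-1)^r\phi'_r\psi'_r(\x\ot 1)\ot 1-\omega'_r(\x)\ot 1$ and induction on $r$, with the key input being $\deg(\phi'_r\psi'_r(\x))\le\deg(\x)$. The only difference is one of detail: the paper dismisses that last inequality with ``from the definitions it follows that,'' whereas you actually carry out the bookkeeping, correctly identifying that $\ra{x^j}$ is the quotient on division by $f$ (so it loses $n$ in degree, cancelling the gain of $|\mathbf{i}|\le mn$ from $\phi'$) and that reduction modulo $f$ inside $A$ can only lower degree.
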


\begin{proof} Let $\x_1 = 1\ot x^{i_1}\ot\cdots \ot x^{i_r}\ot 1\in A\ot\ra{A}^{\ot^r}\ot A$. By
the definition of $w_{r+1}$ it suffices to show that $w'_{r+1}(\x_1)$ is a sum of tensors of the
form
$$
\lambda'x^{j_0}\ot x^{j_1}\ot\cdots\ot x^{j_{r+2}},
$$
with $j_0+ \cdots + j_{r+2} \le i_1+\cdots+i_r$. From the definitions it follows that
$$
\deg(\phi'_r\psi'_r(\x_1)) \le \deg(\x_1).
$$
The proposition follows now by induction on $r$, since
$$
w'_{r+1}(\x_1) = (-1)^r\phi'_r\psi'_r(\x_1)\ot 1 - w'_r(\x_2)x^{i_r}\ot 1,
$$
where  and $\x_2 = 1\ot x^{i_1}\ot\cdots \ot x^{i_{r-1}}\ot 1$.
\end{proof}

\medskip

Let $D_r\colon C^S_r(A)\to C^S_{r+1}(A)$ be the composition $D_r = \phi_{r+1} B_r \psi_r$.

\begin{teo}\label{teorema 3.2} $(C^S_*(A),d_*,D_*)$ is a mixed complex, giving the cyclic homology of $A$.
\end{teo}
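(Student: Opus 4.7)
My approach is to apply the perturbation lemma (Theorem~\ref{teorema 1.4}) in order to transfer the mixed complex structure from the canonical normalized mixed complex $(A\ot\ra{A}^{\ot^*}\ot,b,B)$ to $C^S(A)$. The starting data is the deformation retract from Theorem~\ref{teorema 2.1}(2)--(3), which satisfies $\psi\phi=\id$ and $b\omega+\omega b=\phi\psi-\id$. I would first extend this retract column-wise to the negative Tsygan bicomplex $\BN$ of the canonical mixed complex: replace each column $(A\ot\ra{A}^{\ot^q}\ot,b)$ by the corresponding column $(C^S_q(A),d)$. This produces a deformation retract between the column-wise bicomplex of the canonical side (with zero horizontal differential) and its analogue on the $C^S$ side.

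Next, I would treat the Connes operator $B$ (acting as the horizontal differential of $\BN$) as a perturbation $\delta$ of the unperturbed total differential on the canonical-side bicomplex. It is small: in $\BN$ the column index is bounded below by $0$, and $B$ strictly decreases it while the column-wise extension of $\omega$ preserves it, so $B\omega$ is locally nilpotent and $(\id-B\omega)^{-1}=\sum_{j\ge 0}(B\omega)^j$ is defined term-by-term. Theorem~\ref{teorema 1.4} then produces a perturbed differential on the $C^S$-side bicomplex of the form
\[
d + \sum_{j\ge 0}\psi(B\omega)^j B\phi,
\]
together with perturbed quasi-isomorphisms $\phi^1$ and $\psi^1$.

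The main obstacle I foresee is to show that the higher-order contributions $\psi(B\omega)^j B\phi$ vanish strictly for $j\ge 1$, so that the perturbed differential equals $d+D$ with $D$ the simple operator given in the statement (interpreted so that the composition $C^S_r\to C^S_{r+1}$ makes sense). I would attack this via the explicit recursion for $\omega$ from Proposition~\ref{prop 1.3} combined with the degree control provided by Proposition~\ref{prop 3.1}, together with the fact that entries equal to $1$ vanish in $\ra{A}=A/K$, which should force the iterated compositions to collapse already at the chain level. Once this vanishing is established, the identity $(d+D)^2=0$ supplied by the perturbation lemma immediately yields $d^2=0$, $D^2=0$ and $dD+Dd=0$, i.e., that $(C^S(A),d,D)$ is a mixed complex.

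For the second assertion, Theorem~\ref{teorema 1.4} also provides perturbed quasi-isomorphisms between the two perturbed bicomplexes, so the induced morphism of mixed complexes from $(C^S(A),d,D)$ to $(A\ot\ra{A}^{\ot^*}\ot,b,B)$ is a weak equivalence. Applying the functors $\BC$, $\BN$ and $\BP$ then yields the corresponding isomorphisms on cyclic, negative and periodic homologies, proving that $(C^S(A),d,D)$ computes the relative cyclic homology of $A$.
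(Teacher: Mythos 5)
Your proposal follows essentially the same route as the paper: transfer the mixed structure across the deformation retract of Theorem~\ref{teorema 2.1} using the perturbation lemma with $B$ as a locally nilpotent (hence small) perturbation, reducing the whole statement to the vanishing of $\psi_{r+2j+1}(B\omega)^jB\phi_r$ for $j\ge 1$. The step you flag as the main obstacle is settled in the paper exactly by the degree bookkeeping you point to: Proposition~\ref{prop 3.1} together with the form of $\phi_r$ gives $\deg\bigl((B\omega)^jB\phi_{2m}([\lambda'x^{j}])\bigr)<mn+n$, while $\psi_{2m+2j+1}$ annihilates every element of degree at most $(m+j)n$ (since $\ra{x^{i}}$, the quotient on division by $f$, vanishes for $i<n$), and these two bounds are incompatible for $j\ge 1$.
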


\begin{proof} From Theorem~\ref{teorema 2.1} we get a special deformation retract between the
total complexes of the double complexes $\BC(A\ot\ra{A}^{\ot^*}\ot,b,0)$ and $\BC(C^S_*(A),d_*,
0)$. Consider the perturbation $B$. By the perturbation lemma it suffices to prove that
$\psi_{r+2j+1} (B \omega)^j B \phi_r = 0$ for all $j>0$. Assume first that $r = 2m$. By the
definition of $\phi_{2m}$ and Proposition~\ref{prop 3.1},
$$
\deg((B \omega)^j B \phi_{2m}([\lambda' x^j]) < mn+n
$$
On the other hand $\psi_{2m+2j+1}$ vanishes on element of degree lesser or equal of $(m+j)n$.
The assertion follows by combining theses facts. The case $r = 2m+1$ is similar.
\end{proof}

\begin{teo}\label{teorema 3.3} The Connes operator $D_*$  is given by
\begin{align*}
& D_{2m}([\lambda x^j])=\left[\sum_{h=0}^{j-1}\alpha^{mn+h}(\lambda) x^{j-1}\right]\\
& \phantom{D_{2m}([\lambda x^j])}+\sum_{u=0}^{m-1} \left[\ra{\sum_{i=1}^n\lambda_{n-i}
\left(\sum_{l=0}^{i-1} \alpha^{nu+l}(\lambda)\right) x^{j+i-1}}
\right],\\
& D_{2m+1}([\lambda x^j]) = \begin{cases}0&\text{if $j<n-1$,}\\
(\id - \alpha) (\sum_{u=0}^m \alpha^{nu}(\lambda))&\text{if $j=n-1$.}
\end{cases}
\end{align*}
\end{teo}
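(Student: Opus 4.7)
The proof is a direct calculation of the composition defining $D_r$ applied to a representative $[\lambda x^j]$. Using the explicit formulas for $\phi$ and $\psi$ from Theorem~\ref{teorema 2.1}(2) and the definition of the Connes operator $B$ on the normalized Hochschild complex, one computes in order: the image of $[\lambda x^j]$ under $\phi_r$, then the result of applying $B_r$, and finally its projection back to $C^S$ via $\psi_{r+1}$.

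For $r=2m$, Theorem~\ref{teorema 2.1}(2) expresses $\phi_{2m}([\lambda x^j])$ as a sum over $\mathbf{i}\in\I_m$ and $\boldsymbol{\ell}\in\J_{\mathbf{i}}$ of terms
$$
[\boldsymbol{\lambda}_{\mathbf{n}-\mathbf{i}}\lambda x^{j+|\mathbf{i}-\boldsymbol{\ell}|-m}\ot\wt{\x}^{\boldsymbol{\ell}_{m,1}}].
$$
Applying $B_{2m}$ produces, for each such term, the signed sum of its $2m+1$ cyclic shifts beginning with $1$. Because we work on the \emph{normalized} complex, any cyclic shift that places a pure $K$-element in a middle position vanishes; this kills most of the shifts and leaves a manageable set of survivors in $A\ot\ra{A}^{\ot^{2m+1}}\ot$.

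I then apply $\psi_{2m+1}$ to these survivors. The key observation is that $\psi_{2m+1}$ collapses consecutive pairs to products $\ra{x^{i_{2k-1}+i_{2k}}}$, and whenever $i_{2k-1}+i_{2k}\ge n$ one must reduce using the relation $x^n=-\sum_{i=1}^n\lambda_ix^{n-i}$ in $A$. It is precisely these reductions that generate the $\lambda_{n-i}$-factors appearing in the second sum of the formula. Using the commutation rules $x\lambda=\alpha(\lambda)x$ and $\lambda_i\lambda=\alpha^i(\lambda)\lambda_i$ to gather $K$-coefficients on the left then produces the $\alpha^k$-twists. Collecting the contributions, the result splits into two parts: the ``main'' piece $\bigl[\sum_{h=0}^{j-1}\alpha^{mn+h}(\lambda)x^{j-1}\bigr]$, which comes from cyclic shifts in which no reduction modulo $f$ is required, and the double sum $\sum_{u=0}^{m-1}[\cdots]$, which records precisely those shifts where exactly one reduction occurs, at the $u$-th pair.

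The formula for $D_{2m+1}$ is obtained by the same method. Here a degree analysis in the spirit of Proposition~\ref{prop 3.1} shows that $\psi_{2m+2}$ annihilates every monomial of total degree less than $(m+1)n$, which forces $j=n-1$ as a necessary condition for a nonzero output and yields the value $(\id-\alpha)\bigl(\sum_{u=0}^m\alpha^{nu}(\lambda)\bigr)$. The hardest step of the proof is the combinatorial bookkeeping of the preceding paragraph: matching the triple sum over $\mathbf{i}$, $\boldsymbol{\ell}$, and the cyclic shift index against the clean target formula, correctly tracking the $\alpha$-twists accumulated when commuting elements of $K$ past powers of $x$ and past the $\lambda_{n-i}$, and verifying that all contributions requiring two or more reductions modulo $f$ cancel out.
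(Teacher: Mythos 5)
Your overall strategy---evaluating $\psi_{r+1}\circ B_r\circ \phi_r$ on $[\lambda x^j]$ using the explicit formulas of Theorem~\ref{teorema 2.1}---is exactly the paper's, and your degree argument for the odd case (that $\psi_{2m+2}$ annihilates everything of total degree less than $(m+1)n$, forcing $j=n-1$) is correct and is essentially how the paper obtains $D_{2m+1}$. However, the two mechanisms you propose for taming the even case are both misattributed, and since the entire content of the proof is precisely this bookkeeping, following them would lead the computation astray. First, normalization does not ``kill most of the shifts'': every middle entry of $\phi_{2m}([\lambda x^j])$ is a positive power of $x$ (the entries of $\wt{\x}^{\boldsymbol{\ell}_{m,1}}$ are $x$ and $x^{\ell_j}$ with $\ell_j\ge 1$), and the zeroth entry $\boldsymbol{\lambda}_{\mathbf{n}-\mathbf{i}}\lambda x^{j+|\mathbf{i}-\boldsymbol{\ell}|-m}$ lies in $K$ only in the degenerate case $j=0$ with $\ell_j=i_j-1$ for all $j$. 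So essentially all $2m+1$ cyclic shifts survive the passage to the normalized complex; what actually selects the surviving terms is $\psi_{2m+1}$, whose factors of the form $\ra{x^{\ell_k+1}}$ vanish unless $\ell_k=n-1$, which forces $i_k=n$ for all but at most one index.

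Second, the coefficients $\lambda_{n-i}$ in the second sum of $D_{2m}$ are not generated by rewriting $x^n=-\sum_i\lambda_ix^{n-i}$ while applying $\psi$: they are already present as the prefactor $\boldsymbol{\lambda}_{\mathbf{n}-\mathbf{i}}$ in $\phi_{2m}$, and they survive exactly for the single index $i$ that is not forced to equal $n$ (the forced indices contribute $\lambda_{n-n}=\lambda_0=1$). Note also that $\psi$ is built from the quotients $\ra{P}$ in the division $P=\ra{P}f+\stackrel{\dots}{P}$, not from the remainders, so the reduction rule you invoke is not the operation being performed; correspondingly, your dichotomy ``main piece $=$ no reduction, second sum $=$ exactly one reduction'' does not match the actual case split. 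In the paper the main piece $\bigl[\sum_{h=0}^{j-1}\alpha^{mn+h}(\lambda)x^{j-1}\bigr]$ comes from the full cyclic shift, where every pair sum equals $n$ and the derivation $T/Tx$ acts on the last entry $x^j$, while the second sum collects the remaining $2m$ shifts, grouped in pairs indexed by $u=0,\dots,m-1$ whose contributions telescope. With these two corrections the computation does close up and yields the stated formulas, but as written your guiding principles for the bookkeeping are wrong.
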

\begin{proof} Besides the notations introduced in Theorem~\ref{teorema 1.2} we use
the following ones.

\begin{itemize}

\smallskip

\item $\breve{\x}^{\boldsymbol{\ell}_{u,1}} = x^{l_u}\ot x\ot\cdots \ot x^{l_1}\ot x$,

\smallskip

\item $\wt{\x}^{\boldsymbol{\ell}_{m,u+1}} = x\ot x^{l_m}\ot\cdots \ot x\ot x^{l_{u+1}}$,

\smallskip

\item $|\boldsymbol{\ell}_{u,1}| = \ell_1 +\cdots + \ell_u + u$.

\smallskip

\end{itemize}

\noindent We shall first compute $D_{2m+1}$. By definition
$$
B \phi_{2m+1}([\lambda x^j]) = \sum_{u=0}^m \sum_{\mathbf{i}\in\I_m} \sum_{\boldsymbol{\ell}\in
\J_{\mathbf i}} \Delta_{\mathbf{i},u}^{\boldsymbol{\ell}} - \sum_{u=0}^m \sum_{\mathbf{i}\in\I_m}
\sum_{\boldsymbol{\ell}\in \J_{\mathbf i}} \Upsilon_{\mathbf{i}}^{\boldsymbol{\ell}},
$$
where
\begin{align*}
& \Delta_{\mathbf{i},u}^{\boldsymbol{\ell}} = [\boldsymbol{\lambda}_{\mathbf{n} -
\mathbf{i}}\alpha^{|\boldsymbol{\ell}_{u,1}|}(\lambda) \ot \breve{\x}^{\boldsymbol{\ell}_{u,1}}
\ot x^j
x^{|\mathbf{i}- \boldsymbol{\ell}|-m}\ot \wt{\x}^{\boldsymbol{\ell}_{m,u+1}}\ot x]\\
& \Gamma_{\mathbf{i},u}^{\boldsymbol{\ell}} = [\boldsymbol{\lambda}_{\mathbf{n} -
\mathbf{i}}\alpha^{|\boldsymbol{\ell}_{u,1}|+1}(\lambda) \ot \wt{\x}^{\boldsymbol{\ell}_{u,1}}\ot
x \ot x^j x^{|\mathbf{i}- \boldsymbol{\ell}|-m}\ot \wt{\x}^{\boldsymbol{\ell}_{m,u+1}}].
\end{align*}
If $\psi_{2m+2}(\Delta_{\mathbf{i},u}^{\boldsymbol{\ell}}) \ne 0$, then $l_1 = \cdots = l_m =
n-1$. So $i_1 = \cdots = i_m = n$. Thus,
$$
\sum_{\mathbf{i}\in\I_m} \sum_{\boldsymbol{\ell}\in \J_{\mathbf i}}
\psi_{2m+2}(\Delta_{\mathbf{i},u}^{\boldsymbol{\ell}}) = [\alpha^{nu}(\lambda) \ra{x^{j+1}}] =
\begin{cases} 0 & \text{if $j<n-1$,}\\ [\alpha^{nu}(\lambda)] & \text{if $j=n-1$.} \end{cases}
$$
Similarly, $\psi_{2m+2}(\Gamma_{\mathbf{i},u}^{\boldsymbol{\ell}}) \ne 0$ implies that $l_1 =
\cdots = l_m = n-1$. Hence $i_1 = \cdots = i_m = n$ and
$$
\sum_{\mathbf{i}\in\I_m} \sum_{\boldsymbol{\ell}\in \J_{\mathbf i}}
\psi_{2m+2}(\Gamma_{\mathbf{i},u}^{\boldsymbol{\ell}}) = [\alpha^{nu+1}(\lambda) \ra{x^{j+1}}] =
\begin{cases} 0 & \text{if $j<n-1$,}\\ [\alpha^{nu+1}(\lambda)] & \text{if $j=n-1$.} \end{cases}
$$
The formula for $D_{2m+1}$ follows immediately from these facts. We now compute $D_{2m}$. By
definition
$$
B \phi_{2m}([\lambda x^j]) = \sum_{u=0}^{m-1} \sum_{\mathbf{i}\in\I_m} \sum_{\boldsymbol{\ell}\in
\J_{\mathbf i}} (\Gamma_{\mathbf{i},u}^{\boldsymbol{\ell}} +
\Delta_{\mathbf{i},u}^{\boldsymbol{\ell}}) + \sum_{\mathbf{i}\in\I_m} \sum_{\boldsymbol{\ell}\in
\J_{\mathbf i}} \Upsilon_{\mathbf{i}}^{\boldsymbol{\ell}},
$$
where
\begin{align*}
& \Gamma_{\mathbf{i},u}^{\boldsymbol{\ell}} = [\boldsymbol{\lambda}_{\mathbf{n}- \mathbf{i}}
\alpha^{|\boldsymbol{\ell}_{u1}|}(\lambda) \ot \wt{\x}^{\boldsymbol{\ell}_{u1}} \ot x^j
x^{|\mathbf{i}-
\boldsymbol{\ell}|-m}\ot \wt{\x}^{\boldsymbol{\ell}_{m,u+1}}],\\
& \Delta_{\mathbf{i},u}^{\boldsymbol{\ell}} = [\boldsymbol{\lambda}_{\mathbf{n} -
\mathbf{i}}\alpha^{|\boldsymbol{\ell}_{u+1,1}|-1}(\lambda) \ot x^{l_{u+1}}\ot
\wt{\x}^{\boldsymbol{\ell}_{u1}} \ot x^j
x^{|\mathbf{i}- \boldsymbol{\ell}|-m}\ot \wt{\x}^{\boldsymbol{\ell}_{m,u+2}}\ot x]\\
& \Upsilon_{\mathbf{i}}^{\boldsymbol{\ell}} = [\boldsymbol{\lambda}_{\mathbf{n}- \mathbf{i}}
\alpha^{|\boldsymbol{\ell}_{m,1}|}(\lambda) \ot \wt{\x}^{\boldsymbol{\ell}_{m,1}} \ot x^j
x^{|\mathbf{i}- \boldsymbol{\ell}|-m}].
\end{align*}
If $\psi_{2m+1}(\Gamma_{\mathbf{i},u}^{\boldsymbol{\ell}}) \neq 0$, then $l_1 = \cdots =
\widehat{l_{u+1}} =\cdots = l_m = n-1$. In this case $i_1 = \cdots = \widehat{i_{u+1}} =\cdots =
i_m = n$ and
\begin{align*}
\psi_{2m+1}(\Gamma_{\mathbf{i},u}^{\boldsymbol{\ell}}) & = \sum_{h=0}^{l-1} \left[x^{l-h-1}
\lambda_{n-i} \alpha^{nu}(\lambda) \ra{\stackrel{\dots\dots\dots\dots
\dots\dots\dots} {x^{j+i-l-1}}x}x^h\right]\\
& =\sum_{h=0}^{l-1} \left[ \lambda_{n-i} \alpha^{nu+l-h-1}(\lambda)
x^{l-h-1}\ra{\stackrel{\dots\dots\dots\dots \dots\dots\dots} {x^{j+i-l-1}}x}x^h\right]\\
& = \sum_{h=0}^{l-1} \left[\lambda_{n-i} \alpha^{nu+l-h-1}(\lambda) x^{l-1}
\ra{\stackrel{\dots\dots\dots\dots
\dots\dots\dots} {x^{j+i-l-1}}x}\right]\\
& = \sum_{h=0}^{l-1} \left[\lambda_{n-i} \alpha^{nu+l-h-1}(\lambda)x^{l-1}\left(\ra{x^{j+i-l}} -
\ra{x^{j+i-l-1}}x \right) \right].
\end{align*}
So,
\begin{align*}
\sum_{\mathbf{i}\in\I_m} \sum_{\boldsymbol{\ell}\in \J_{\mathbf i}}
\psi_{2m+1}(\Gamma_{\mathbf{i},u}^{\boldsymbol{\ell}}) & = \sum_{i=1}^n
\sum_{l=1}^{i-1}\sum_{h=0}^{l-1}\left[\lambda_{n-i}
\alpha^{nu+l-h-1}(\lambda)x^{l-1} \ra{x^{j+i-l}}\right]\\
&-\sum_{i=1}^n\sum_{l=2}^i\sum_{h=1}^{l-1}\left[\lambda_{n-i}\alpha^{nu+l-h-1}(\lambda)x^{l-1}
\ra{x^{j+i-l}} \right]\\
& = \sum_{i=1}^n\sum_{l=1}^{i-1}\left[\lambda_{n-i}\alpha^{nu+l-1}(\lambda)x^{l-1} \ra{x^{j+i-l}}
\right].
\end{align*}
Similarly, $\psi_{2m+1}(\Delta_{\mathbf{i},u}^{\boldsymbol{\ell}})\ne 0$ implies $l_2 = \cdots =
l_m = n-1$. In this case $i_2 = \cdots = i_m = n$ and
\begin{align*}
\psi_{2m+1}(\Delta_{\mathbf{i},u}^{\boldsymbol{\ell}}) & = \left[\lambda_{n-i}
\alpha^{nu+l}(\lambda) \ra{x^{l}\stackrel{\dots\dots\dots\dots\dots\dots\dots}
{x^{j+i-l-1}}}\right]\\
& = \left[\lambda_{n-i} \alpha^{nu+l}(\lambda)\left( \ra{ x^{j+i-1}} -
x^{l}\ra{x^{j+i-l-1}}\right)\right].
\end{align*}
So,
\begin{align*}
\sum_{\mathbf{i}\in\I_m} \sum_{\boldsymbol{\ell}\in \J_{\mathbf{i}}}
\psi_{2m+1}(\Delta_{\mathbf{i},u}^{\boldsymbol{\ell}}) & = \sum_{i=1}^n \left[\lambda_{n-i}
\left(\sum_{l=1}^{i-1} \alpha^{nu+l}(\lambda)\right) \ra{x^{j+i-1}}\right]\\
& - \sum_{i=1}^n \sum_{l=1}^{i-1}\left[\lambda_{n-i}
\alpha^{nu+l}(\lambda)x^{l}\ra{x^{j+i-l-1}}\right].
\end{align*}
Consequently,
\begin{align*}
\sum_{\mathbf{i}\in\I_m} \sum_{\boldsymbol{\ell}\in \J_{\mathbf i}}
\psi_{2m+1}(\Gamma_{\mathbf{i},u}^{\boldsymbol{\ell}} + \Delta_{\mathbf{i},u}
^{\boldsymbol{\ell}}) & = \sum_{i=1}^n \sum_{l=1}^{i-1}\left[\lambda_{n-i}
\alpha^{nu+l-1}(\lambda)x^{l-1}\ra{x^{j+i-l}}\right]\\
& - \sum_{i=1}^n \sum_{l=2}^{i}\left[\lambda_{n-i}
\alpha^{nu+l-1}(\lambda)x^{l-1}\ra{x^{j+i-l}}\right]\\
& + \sum_{i=1}^n \left[\lambda_{n-i} \left(\sum_{l=1}^{i-1}
\alpha^{nu+l}(\lambda)\right)\ra{x^{j+i-1}}\right] \\
& = \sum_{i=1}^n \left[\lambda_{n-i} \alpha^{nu}(\lambda) \ra{x^{j+i-1}}\right]\\
& + \left[\sum_{i=1}^n\lambda_{n-i}\left(\sum_{l=1}^{i-1}
\alpha^{nu+l}(\lambda)\right) \ra{x^{j+i-1}} \right]\\
& = \left[\ra{\sum_{i=1}^n\lambda_{n-i} \left(\sum_{l=0}^{i-1} \alpha^{nu+l}(\lambda)\right)
x^{j+i-1}} \right].
\end{align*}
Lastly, $\psi_{2m+1}(\Upsilon_{\mathbf{i}}^{\boldsymbol{\ell}}) = 0$ except if $l_1 = \cdots =l_m
= n-1$. In this last case $i_1 = \cdots = i_m = n$. So
$$
\sum_{\mathbf{i}\in\I_m} \sum_{\boldsymbol{\ell}\in \J_{\mathbf i}}
\psi_{2m+1}(\Upsilon_{\mathbf{i}}^{\boldsymbol{\ell}}) = \sum_{h=0}^{j-1}
\left[x^{j-h-1}\alpha^{mn}(\lambda) x^h\right] = \left[\sum_{h=0}^{j-1}\alpha^{mn+h}(\lambda)
x^{j-1}\right].
$$
the formula for $D_{2m}$ follows easily from all these facts.
\end{proof}

\subsection{Explicit computations}
In this subsection we compute the cy\-clic homology of $A$ with coefficients in $A$, under
suitable hypothesis. We will freely use the notations introduced at the beggining of Section~2 and
bellow remark~\ref{remark 2.5}. Recall that by Theorem~\ref{teorema 2.2}, if there exists
$\breve{\lambda}\in \Z(K)$ such that
\begin{itemize}

\smallskip

\item $\alpha^n(\breve{\lambda})=\breve{\lambda}$,

\smallskip

\item $\breve{\lambda}-\alpha^i(\breve{\lambda})$ is invertible in $K$ for $1\le i<n$,

\end{itemize}
then
\[
C^S_r(A) = \begin{cases} \frac{K}{[K,K]_{\alpha^{mn}}}&\text{if $r=2m$,}\\
\frac{K}{[K,K]_{\alpha^{(m+1)n}}}x^{n-1}& \text{if $r=2m+1$.}\end{cases}
\]
Moreover, by Theorem~\ref{teorema 2.3}, the Hochschild boundary maps of the mixed complex
$(C^S_*(A),d_*,D_*)$ are given by
\begin{align*}
& d_{2m+1}([\lambda]x^{n-1} )= [(\alpha(\lambda)-\lambda)\lambda_n],\\
& d_{2m+2}([\lambda])= \Biggl[\sum_{\ell=0}^{n-1} \alpha^{\ell}(\lambda) \Biggr]x^{n-1}.
\end{align*}
We now compute the Connes operator $D_*$.

\begin{teo}\label{teorema 3.4} Under the hypothesis of Theorem~\ref{teorema 2.2}, we have:
\begin{align*}
& D_{2m}([\lambda])= 0,\\
& D_{2m+1}([\lambda] x^{n-1}) = \left[(\id - \alpha) \left(\sum_{u=0}^m
\alpha^{nu}(\lambda)\right)\right].
\end{align*}
\end{teo}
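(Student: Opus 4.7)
The plan is to obtain Theorem~\ref{teorema 3.4} as a direct specialization of Theorem~\ref{teorema 3.3}. Under the hypothesis of Theorem~\ref{teorema 2.2} one has $C^S_{2m}(A) = K/[K,K]_{\alpha^{mn}}$, so every element has a representative of the form $[\lambda]$ with $\lambda \in K$ (this corresponds to $j = 0$ in the notation of Theorem~\ref{teorema 3.3}); similarly $C^S_{2m+1}(A) = (K/[K,K]_{\alpha^{(m+1)n}})\, x^{n-1}$, so every element has a representative of the form $[\lambda]\, x^{n-1}$ (this corresponds to $j = n-1$). Consequently, only these two specializations of the general formulas in Theorem~\ref{teorema 3.3} need to be examined.

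For the odd Connes map I substitute $j = n - 1$ directly into the formula for $D_{2m+1}$ in Theorem~\ref{teorema 3.3}. Since $j = n - 1$ falls in the second branch of the piecewise formula, this immediately gives
\[
D_{2m+1}([\lambda]\, x^{n-1}) = \Bigl[(\id - \alpha)\Bigl(\sum_{u=0}^m \alpha^{nu}(\lambda)\Bigr)\Bigr],
\]
which is the second identity in the statement.

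For the even Connes map I substitute $j = 0$ into the formula for $D_{2m}$ in Theorem~\ref{teorema 3.3}. The leading summand $\bigl[\sum_{h=0}^{j-1}\alpha^{mn+h}(\lambda) x^{j-1}\bigr]$ disappears because the sum over $h$ ranges over the empty set. For each $u$ in the remaining outer sum, the polynomial to which $\ra{\,\cdot\,}$ is applied is
\[
\sum_{i=1}^n \lambda_{n-i}\Biggl(\sum_{l=0}^{i-1} \alpha^{nu+l}(\lambda)\Biggr) x^{i-1},
\]
whose degree in $x$ is at most $n - 1$. Recalling from Section~1 that $\ra{P}$ denotes the \emph{quotient} of $P \in B$ in the division $P = \ra{P}\, f + \stackrel{\dots}{P}$ with $\deg \stackrel{\dots}{P} < n$, one has $\ra{P} = 0$ whenever $\deg P < n$. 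Hence each term of the outer sum vanishes and $D_{2m}([\lambda]) = 0$. The argument presents no real obstacle: everything hinges on the observation that $\ra{\,\cdot\,}$ is the polynomial quotient (not the remainder), so that the degree bound $i - 1 \le n - 1$ collapses the second summand identically.
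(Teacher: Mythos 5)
Your proof is correct and follows exactly the route the paper takes, namely specializing the general formulas of Theorem~\ref{teorema 3.3} to $j=0$ and $j=n-1$, which by Theorem~\ref{teorema 2.2} are the only cases that occur. The key observation that makes $D_{2m}$ vanish --- that $\ra{P}$ is the \emph{quotient} in the division by $f$, hence zero on polynomials of degree $<n$ --- is precisely why the paper can declare the result immediate, and you have identified and justified it correctly.
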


\begin{proof} If follows immediately from Theorem~\ref{teorema 3.3}.
\end{proof}

\begin{teo}\label{teorema 3.5} Assume the hypothesis of Theorem~\ref{teorema 2.6} are fulfilled.
Then the mixed complex $(C^S_*(A),d_*,D_*)$ decomposes as a direct sum
$$
(C^S_*(A),d_*,D_*) = \bigoplus_{h=1}^s (C^{S,\omega_h}_*(A),d^{\omega_h}_*,D^{\omega_h}_*),
$$
where the Hochschid complexes $(C^{S,\omega_h}_*(A),d^{\omega_h}_*)$ are as in
Theorem~\ref{teorema 2.6}. Moreover the Connes operators $D^{\omega_h}_*$ satisfy:
$D^{\omega_h}_{2m}=0$, and
$$
D^{\omega_h}_{2m+1}([\lambda]x^{n-1})= (1-\omega_h)\left(\sum_{u=0}^m
\omega_h^{nu}\right)[\lambda].
$$
\end{teo}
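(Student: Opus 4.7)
The plan is to derive Theorem~\ref{teorema 3.5} directly from Theorem~\ref{teorema 3.4} by restricting everything to eigenspaces. The decomposition of the Hochschild complex $(C^S_*(A),d_*)$ as a direct sum of subcomplexes indexed by the eigenvalues $\omega_1,\dots,\omega_s$ of $\alpha$ was already established in Theorem~\ref{teorema 2.6}. Thus the only thing left is to verify that the Connes operator $D_*$ respects this decomposition and acts on each summand by the claimed formula.

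For the even part this is immediate: Theorem~\ref{teorema 3.4} gives $D_{2m}([\lambda])=0$ for every $\lambda\in K$, so in particular $D_{2m}$ vanishes on each eigenspace summand and the definition $D^{\omega_h}_{2m}:=0$ is forced.

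For the odd part, the idea is simply to substitute an eigenvector into the formula of Theorem~\ref{teorema 3.4}. If $\lambda\in K^{\omega_h}$, then $\alpha^{nu}(\lambda)=\omega_h^{nu}\lambda$ for every $u\ge 0$ and $(\id-\alpha)(\lambda)=(1-\omega_h)\lambda$, so
\[
D_{2m+1}([\lambda]x^{n-1})=\Bigl[(\id-\alpha)\Bigl(\sum_{u=0}^m \alpha^{nu}(\lambda)\Bigr)\Bigr]=(1-\omega_h)\Bigl(\sum_{u=0}^m \omega_h^{nu}\Bigr)[\lambda].
\]
The right-hand side lies in $K^{\omega_h}$ modulo $[K,K]^{\omega_h}_{\alpha^{(m+1)n}}$, i.e.~in $C^{S,\omega_h}_{2m+2}(A)$. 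This simultaneously proves that $D_{2m+1}$ preserves the eigenspace decomposition and yields the stated formula for $D^{\omega_h}_{2m+1}$.

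There is essentially no obstacle: the mixed complex identities $d\circ d=0$, $D\circ D=0$ and $dD+Dd=0$ on each summand are inherited from the fact that $(C^S_*(A),d_*,D_*)$ is already known to be a mixed complex by Theorem~\ref{teorema 3.2}, so no further computation is required once compatibility with the eigenspace decomposition is checked. The only mild point to keep in mind is that the formula must be interpreted modulo the twisted commutators $[K,K]^{\omega_h}_{\alpha^{(m+1)n}}$, but this is automatic since $D_*$ is already well defined on the quotient $C^S_*(A)$.
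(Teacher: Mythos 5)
Your proposal is correct and follows the same route as the paper, which simply derives Theorem~\ref{teorema 3.5} from Theorem~\ref{teorema 3.4} by observing that $\alpha$ acts as the scalar $\omega_h$ on $K^{\omega_h}$; your write-up just spells out the eigenvector substitution and the compatibility of $D_*$ with the decomposition from Theorem~\ref{teorema 2.6} in more detail.
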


\begin{proof} If follows immediately from Theorem~\ref{teorema 3.4}.
\end{proof}

In the rest of this section we assume that $k$ is a characteristic zero field and that hypothesis
of Theorem~\ref{teorema 2.6} are fulfilled. We let $\HC^{K,\omega_h}_*(A)$ denote the cyclic
homology of $(C^{S,\omega_h}_*(A),d^{\omega_h}_*,D^{\omega_h}_*)$.

\begin{teo}\label{teorema 3.6} The cyclic homology of $A$ decomposes as
$$
\HC^K_*(A) = \bigoplus_{h=1}^s \HC^{K,\omega_h}_*(A).
$$
Moreover, we have:
\begin{align*}
& \HC^{K,\omega_h}_{2m}(A) = \begin{cases} \frac{K^1}{[K,K]^1} & \text{if $h=1$,}\\[5pt]
\frac{K^{\omega_h}}{[K,K]^{\omega_h} + K^{\omega_h}\lambda_n} & \text{if $\omega_h^n \ne 1$,}\\[5pt]
\frac{K^{\omega_h}}{[K,K]^{\omega_h}+K^{\omega_h}\lambda_n^{m+1}}& \text{otherwise,}
\end{cases}\\[5pt]
& \HC^{K,\omega_h}_{2m+1}(A) = \begin{cases} 0 & \text{if $h=1$ or $\omega_h^n \ne 1$,}\\[5pt]
\frac{\{\lambda\in K^{\omega_h}: \lambda\lambda_n^m\in [K,K]^{\omega_h}\}}
{[K,K]^{\omega_h}_{\alpha^{(m+1)n}}}x^{n-1} & \text{otherwise,}
\end{cases}
\end{align*}
\end{teo}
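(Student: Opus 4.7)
The plan is to use Theorem~\ref{teorema 3.5} to decompose the cyclic bicomplex as a direct sum of sub-mixed-complexes indexed by the eigenvalues of $\alpha$, so that $\HC^K_*(A) = \bigoplus_h \HC^{K,\omega_h}_*(A)$, and then compute each summand case by case.

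The two degenerate cases are easy. When $\omega_h = 1$, both the Connes operator $D^1_*$ and the odd Hochschild differentials $d^1_{2m+1}$ vanish, while $d^1_{2m}$ is multiplication by the scalar $n$, which is invertible because $\mathrm{char}\,k = 0$. Hence $\HH^{K,1}_*(A)$ is concentrated in degree zero with value $K^1/[K,K]^1$, and since $D^1_* = 0$ the bicomplex for this piece splits as a direct sum of shifted copies of $(C^{S,1}_*(A),d^1_*)$, producing the stated formulas. When $\omega_h \ne 1$ and $\omega_h^n \ne 1$, Corollary~\ref{corolario 2.7} similarly shows that $\HH^{K,\omega_h}_*(A)$ is concentrated in degree zero; the Connes SBI long exact sequence
$$\cdots \to \HH^{K,\omega_h}_n(A) \xrightarrow{I} \HC^{K,\omega_h}_n(A) \xrightarrow{S} \HC^{K,\omega_h}_{n-2}(A) \xrightarrow{B} \HH^{K,\omega_h}_{n-1}(A) \to \cdots$$
then forces $S$ to be an isomorphism for every $n \ge 2$ and $\HC^{K,\omega_h}_1(A) = 0$, yielding the answer.

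The main case is $\omega_h \ne 1$ with $\omega_h^n = 1$. Writing $W_m = [K,K]^{\omega_h}_{\alpha^{mn}}$, the sub-mixed-complex has $C^{S,\omega_h}_{2m} = K^{\omega_h}/W_m$ and $C^{S,\omega_h}_{2m+1} = (K^{\omega_h}/W_{m+1})\,x^{n-1}$, with $b_{2m+1}([\lambda]x^{n-1}) = (\omega_h - 1)[\lambda\lambda_n]$, $b_{2m} = 0$, $D_{2m} = 0$, and $D_{2m+1}([\lambda]x^{n-1}) = (1-\omega_h)(m+1)[\lambda]$. Because $(1-\omega_h)(m+1)$ is a unit in $k$, I would apply the perturbation lemma (Theorem~\ref{teorema 1.4}) to the cyclic bicomplex: use $D$ to cancel each odd-degree generator $[\lambda]x^{n-1} \in C_{2m+1}$ against its image $(1-\omega_h)(m+1)[\lambda] \in C_{2m+2}$, and track the perturbed differential coming from $b$. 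The identity $[\mu,\nu]_{\alpha^{(m+1)n}}\lambda_n = [\mu\lambda_n,\nu]_{\alpha^{mn}}$ ensures $W_{m+1}\lambda_n \subseteq W_m$, so multiplication by $\lambda_n$ is a well-defined map $K^{\omega_h}/W_{m+1} \to K^{\omega_h}/W_m$; after the perturbation-lemma cancellation, this multiplication by $\lambda_n$ becomes the surviving differential between the reduced generators. The stated cyclic groups are then read off as cokernels and kernels of iterates of this multiplication.

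The main obstacle is making the perturbation-lemma reduction precise: one must construct a special deformation retract from $\mathrm{Tot}(\BC)$ onto a much smaller complex compatible with the $D$-perturbation, and, crucially, verify that after the cancellation the successive powers $\lambda_n^{m+1}$ appear in the surviving differential at the correct homological degrees. As an alternative route one may proceed by induction in $m$ using the SBI sequence, explicitly tracking the connecting map $B\colon \HC^{K,\omega_h}_{n-2}(A) \to \HH^{K,\omega_h}_{n-1}(A)$ at chain level and showing that its image consists of classes divisible by $\lambda_n^m$; this is what transforms the naive Hochschild quotients by $\lambda_n$ into the sharper cyclic quotients by $\lambda_n^{m+1}$.
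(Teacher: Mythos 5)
Your proposal is correct and follows essentially the same route as the paper: the decomposition comes from Theorem~\ref{teorema 3.5}, the cases $h=1$ and $\omega_h^n\ne 1$ are read off from Corollary~\ref{corolario 2.7} (the paper does both at once; your $D=0$ argument for $h=1$ and SBI argument for $\omega_h^n\ne1$ are equivalent), and for the main case the paper carries out exactly the Gaussian elimination you describe, only by hand --- it solves the recursion $[\zeta_{2i+1}]=\frac{(-1)^{m-i}i!}{m!}[\zeta_{2m+1}\lambda_n^{m-i}]$ coming from the invertible horizontal maps $l_{i(1-\omega_h)}$ instead of invoking the perturbation lemma, arriving at the same surviving differential, namely multiplication by $\lambda_n^{m+1}$ up to a unit. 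One small caution: the reduced differential in total degree $2m+1$ is the $(m+1)$-fold composite $\lambda_n^{m+1}$ (well defined by your inclusion $W_{m+1}\lambda_n\subseteq W_m$), not a single multiplication by $\lambda_n$, and tracking this exponent carefully is exactly where the power $\lambda_n^{m+1}$ in $\HC^{K,\omega_h}_{2m}$ comes from.
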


\begin{proof} The first assertion is an immediate consequence of Theorem~\ref{teorema 3.5}, and the
computation of $\HC^{K,\omega_h}_*$ for $h=1$ and for $\omega_h^n \ne 1$ follows from
Corollary~\ref{corolario 2.7}. So, in order to finish the proof it remains to consider the case
$h>1$ and $\omega_h^n = 1$. By Theorems~\ref{teorema 2.6} and \ref{teorema 3.4}, the cyclic
homology of the mixed complex $(C^{S, \omega_h}_*(A),d^{\omega_h}_*,D^{\omega_h}_*)$, is the
homology of
\[
\spreaddiagramcolumns{1.75pc} \xymatrix{ \vdots \dto^-{d^{\omega_h}} &\vdots \dto^-{0}& \vdots
\dto^-{d^{\omega_h}}& \vdots \dto^-{0}& \vdots \dto^-{d^{\omega_h}}\\
X_4 \dto^-{0} & X_3 \lto_-{l_{2(1-\omega_h)}}\dto^-{d^{\omega_h}} & X_2 \dto^-{0} \lto_-{0}& X_1
\dto^-{d^{\omega_h}}\lto_-{l_{1-\omega_h}}& X_0 \lto_-{0}\\
X_3 \dto^-{d^{\omega_h}} & X_2 \lto_-{0}\dto^-{0} & X_1 \dto^-{d^{\omega_h}} \lto_-{l_{1-\omega_h}}& X_0 \lto_-{0}\\
X_2 \dto^-{0} & X_1 \lto_-{l_{1-\omega_h}}\dto^-{d^{\omega_h}} & X_0 \lto_-{0}\\
X_1 \dto^-{d^{\omega_h}} & X_0 \lto_-{0}\\
X_0,}
\]
where
\begin{itemize}

\item $X_{2m} = \frac{K^{\omega_h}} {[K,K]^{\omega_h}_{\alpha^{mn}}}$ and $X_{2m+1} = \frac{K^{\omega_h}}
{[K,K]^{\omega_h}_{\alpha^{(m+1)n}}}x^{n-1}$,

\smallskip

\item $l_{m(1-\omega_h)}([\lambda]) = m(1-\omega_h)[\lambda]$,

\smallskip

\item $d_{2m+1}^{\omega_h}([\lambda]x^{n-1}) = (\omega_h-1)[\lambda\lambda_n]$.

\medskip

\end{itemize}
We first compute the homology in degree $2m$. Let
$$
\iota\colon X_0\to X_{2m}\oplus X_{2m-2}\oplus\cdots\oplus X_0
$$
be the canonical inclusion. By using that each $l_{i(1-\omega_h)}$ map is an isomorphism it is
easy to see that $\iota$ induces an epimorphism
$$
\ra{\iota}\colon X_0\to \HC^{K,\omega_h}_{2m}(A).
$$
It is easy to see now that the boundary of
$$
([\zeta_{2m+1}],[\zeta_{(2m-1)+1}],\dots,[\zeta_1])\in X_{2m+1}\oplus X_{(2m-1)+1} \oplus \cdots
\oplus X_1
$$
equals $i([\lambda])$ if and only if
\begin{equation}
[\zeta_{2i+1}] = \frac{(-1)^{m-i}i!}{m!} [\zeta_{2m+1}\lambda_n^{m-i}]\quad\text{for $0\le i\le
m$}\label{eq3}
\end{equation}
and $[\zeta_{2m+1}\lambda_n^{m+1}] = [\lambda]$. The assertion about $\HC_{2m}^{K,\omega_h}(A)$
follows easily from these facts. We now are going to compute the homology in degree $2m+1$. It is
immediate that
$$
([\zeta_{2m+1}],[\zeta_{(2m-1)+1}],\dots,[\zeta_1])\in X_{2m+1}\oplus X_{(2m-1)+1} \oplus \cdots
\oplus X_1
$$
is a cycle of degree $2m+1$ if and only if it satisfies \eqref{eq3} and
$$
\zeta_{2m+1} \lambda_n^{m+1} \in [K,K]^{\omega_h}.
$$
This ends the computation of $\HC_{2m+1}^{K,\omega_h}(A)$.
\end{proof}

\begin{rem}\label{nota 3.7} From the above computations it follows that:

\begin{enumerate}

\item If $h=1$ or $\omega_h^n\ne 1$, then the map
$$
S\colon \HC^{K,\omega_h}_{2m+2}(A)\to \HC^{K,\omega_h}_{2m}(A)
$$
is the identity map.

\smallskip

\item If $h>1$ and $\omega_h^n = 1$, then we have:

\renewcommand{\descriptionlabel}[1]
{\textrm{#1}}
\begin{description}

\smallskip

\item[\rm a.] The map $S\colon \HC^{K,\omega_h}_{2m+2}(A)\to \HC^{K,\omega_h}_{2m}(A)$ is the canonical
surjection.

\smallskip

\item[\rm b.] The map $i\colon \HH^{K,\omega_h}_{2m}(A)\to \HC^{K,\omega_h}_{2m}(A)$ is given by
$$
i([\lambda]) = \frac{1}{m!}[\lambda\lambda_n^m].
$$

\smallskip

\item[\rm c.] The map $B\colon \HC^{K,\omega_h}_{2m}(A)\to \HH^{K,\omega_h}_{2m+1}(A)$ is zero.

\smallskip

\item[\rm d.] The map $S\colon \HC^{K,\omega_h}_{2m+3}(A)\to \HC^{K,\omega_h}_{2m+1}(A)$ is given by
$$
S([\lambda]x^{n-1}) = \frac{1}{m+1}[\lambda\lambda_n]x^{n-1}.
$$

\smallskip

\item[\rm e.] The map $i\colon \HH^{K,\omega_h}_{2m+1}(A)\to \HC^{K,\omega_h}_{2m+1}(A)$ is the canonical
inclusion.

\smallskip

\item[\rm f.] The map $B\colon \HC^{K,\omega_h}_{2m+1}(A)\to \HH^{K,\omega_h}_{2m+2}(A)$ is given by
$$
B([\lambda]x^{n-1}) = [(m+1)(1-\omega_h))\lambda]x^{n-1}.
$$
\end{description}

\end{enumerate}

\end{rem}

\begin{rem}\label{nota 3.8} Theorem~\ref{teorema 3.6} applies in particular to the monogenic
extensions of finite group algebras $K = k[G]$ considered in Example~\ref{ejemplo 2.8}. Note that
since $K$ is a separable $k$-algebra, this computes the absolute cyclic homology, as follows
easily from \cite[Theorem~1.2]{G-S} using the SBI-sequence. We now consider a concrete example.
Let $G=D_{2n}$, $\chi$, $\alpha$ and $A$ be as in Example~\ref{ejemplo 2.8}. Then:
\begin{align*}
&\HC_{2m}(A) = \frac{\C[\langle g\rangle]}{\bigoplus_{j=1}^{[(u-1)/2]} \C(g^j-g^{u-j})} \oplus
\frac{\C[\langle g\rangle]h}{\C[\langle g\rangle](g^2-1)h},\\
&\HC_{2m+1}(A) = \frac{\C[\langle g\rangle]h}{\C[\langle g\rangle](g^2-1)h} x.
\end{align*}

\end{rem}

\subsection{Cyclic homology of rank~$1$ Hopf algebras} Let $k$, $G$, $\chi$, $g_1$, $\alpha$ and
$A$ be as in Subsection~2.2. Here we compute the cyclic homology of $A$. Let $C_n\subseteq k$ be
the set of all $n$-th roots of $1$. As in the above mentioned subsection we consider three cases.

\medskip

\noindent\bf $\boldsymbol{\xi} \boldsymbol{=} \boldsymbol{0}$.\rm\enspace That is  $A =
K[x,\alpha]/\langle x^n\rangle$, where $K = k[G]$. From Theorem~\ref{teorema 3.6} it follows that
\begin{align*}
&\HC_{2m}(A) = \frac{K}{[K,K]},\\
&\HC_{2m+1}(A) = \bigoplus_{w\in C_n\setminus\{1\}} \frac{K^w}{[K,K]^w_{\alpha^{(m+1)n}}} x^{n-1}.
\end{align*}

\medskip

\noindent\bf $\boldsymbol{\xi} \boldsymbol{\ne} \boldsymbol{0}$ and
$\boldsymbol{\chi}^{\mathbf{n}} \boldsymbol{=} \boldsymbol{\id}$.\rm\enspace In this case $A =
K[x,\alpha]/\langle x^n-\xi(g_1^n-1)\rangle$, where $K = k[G]$. Arguing as in Subsection~2.2, but
using Theorem~\ref{teorema 3.6} instead of Corollary~\ref{corolario 2.7}, we obtain
\begin{align*}
&\HC_{2m}(A) = \frac{K^1}{[K,K]^1}\oplus \bigoplus_{w\in C_n\setminus\{1\}}
\frac{K^w}{[K,K]^w+K^w(g_1^n-1)^{m+1}},\\
&\HC_{2m+1}(A) = \bigoplus_{w\in C_n\setminus\{1\}} \frac{\{\lambda\in K^w:\lambda(g_1^n-1)^m\in
[K,K]^w \}}{[K,K]^w}x^{n-1}.
\end{align*}

\medskip

\noindent\bf $\boldsymbol{\xi} \boldsymbol{\ne} \boldsymbol{0}$ and
$\boldsymbol{\chi}^{\mathbf{n}} \boldsymbol{\ne} \boldsymbol{\id}$.\rm\enspace In this case $A =
K[x,\wt{\alpha}]\langle x^n\rangle$, where the algebra $K = k[G/\langle g_1^n\rangle]$ and
$\wt{\alpha}$ is the automorphism induced by $\alpha$. By Theorem~\ref{teorema 3.6}, we obtain
\begin{align*}
&\HC_{2m}(A) = \frac{K}{[K,K]},\\
&\HC_{2m+1}(A) = \bigoplus_{w\in C_n\setminus\{1\}} \frac{K^w}{[K,K]^w_{\wt{\alpha}^{(m+1)n}}}
x^{n-1}.
\end{align*}

\end{document}